\theoremstyle{plain}
\newtheorem{thm}{Theorem}[section]
\newtheorem{lem}[thm]{Lemma}
\newtheorem{coro}[thm]{Corollary}
\newtheorem{claim}[thm]{Claim}
\theoremstyle{plain}
\theoremstyle{plain}
\theoremstyle{plain}
\title{A sufficient condition for a hypergraph to have a Berge-$k$-factor}
\author{Yuping Gao, Songling Shan, Gexin Yu\\
{\small a. School of Mathematics and Statistics, Lanzhou University, Lanzhou 730000, China}\\
{\small b. Department of Mathematics and Statistics, Auburn  University, Auburn, AL 36849, USA}\\
{\small c. Department of Mathematics, William \& Mary, Williamsburg, VA 23185, USA}}
\date{}
\begin{document}

\maketitle
\emph{\textbf{Abstract}.}
For any graph (hypergraph) $G$ with vertex set $V$ and edge set $E$, we define its incidence bipartite graph $\mathcal{I}(G)$ as the bipartite graph with bipartition $(E, V)$, where an edge $e \in E$ is adjacent to a vertex $v \in V$ in $\mathcal{I}(G)$ if and only if $e$ is incident to $v$ in $G$. This representation allows all concepts and properties of $G$ to be reformulated in terms of those of $\mathcal{I}(G)$.
In this paper, we investigate the notions of graph toughness and $k$-factors in bipartite graphs through this incidence perspective. As an application, our result  implies  the classic  theorem of Enomoto, Jackson, Katerinis, and Saito: for any integer $k \geq 1$, a $k$-tough graph $G$ has a $k$-factor if $k |V(G)|$ is even and $|V(G)| \geq k+1$. Furthermore, we extend this result to hypergraphs, without requiring uniformity.

\medskip

\noindent {\textbf{Keywords}:  Incidence bipartite graph;  Toughness; $k$-factor; Parity factor}

\section{Introduction}

All graphs considered in this paper are undirected,  simple,  and finite. Let $G$ be a graph.
For two disjoint subsets $S$ and $T$ of $V(G)$,  $e_{G}(S,T)$ denotes the number of edges in $G$ with one endvertex in $S$ and the other in $T$. If $S=\{x\}$, we simply write $e_{G}(\{x\},T)$ as $e_{G}(x,T)$. Also
for a subgraph $D$ of $G$ with $V(D)\cap S=\emptyset$, we write $e_G(S,V(D))$ as $e_G(S,D)$.
For $S\subseteq V(G)$, we denote by $G[S]$ the subgraph of $G$ induced by $S$ and $G-S$ the subgraph $G[V(G)\setminus S]$. We also let $N_G(S)=\bigcup_{v\in S}N_G(v)$, where $N_{G}(v)$ is the set of vertices that are adjacent to  $v$ in $G$.  For integers $p$
and $q$, we let $[p,q]=\{i\in \mathbb{Z}:  p\le i\le q\}$.

A hypergraph $H$ consists of a vertex set $V(H)$ and an edge set $E(H)$,
where each edge in $E(H)$ is a subset of $V(H)$. We allow loops (size one elements in $E(H)$) and multiple edges (repeated elements in $E(H)$) in $H$ for generality, although they play no role when considering connectivity-type parameters.
For an integer $r\ge 1$,
we say  that $H$ is $r$-\emph{uniform} if every edge of $H$ contains
exactly $r$ vertices.   Thus a  graph is a $2$-uniform hypergraph.
For notational simplicity, for a graph $G$, we denote  an edge $e=\{u,v\}$ by $uv$.
A vertex $v\in V(H)$ is \emph{incident to} an edge $e\in E(H)$
if $v\in e$.   For $S\subseteq V(H)$, $H-S$
is obtained from $H$ by deleting all vertices of $S$ and all edges that are incident to a vertex of $S$.
We define the  incidence bipartite graph $\mathcal{I}(H)$ of $H$  as the bipartite graph with bipartition $(E(H), V(H))$, where an edge $e \in E(H)$ is adjacent to a vertex $v \in V(H)$ in $\mathcal{I}(H)$ if and only if $e$ is incident to $v$ in $H$.
This representation  allows all concepts and properties of $H$ to be reformulated in terms of those of $\mathcal{I}(H)$.
Conversely, given any bipartite graph $G$ with bipartition $X$ and $Y$ such that every $x\in X$ satisfying $d_G(x) \ge 1$,
we can construct a hypergraph $H$ on $Y$  with $E(H)=\{N_G(x): x\in X\}$.  We call $H$
the  \emph{hypergraph associated  with  $G$}.  (Note that even if $G$ is a simple graph, $H$ may contain loops or multiple edges. This is another reason that we allow hypergraphs in this paper to have loops or multiple edges.)

In this paper, we investigate the notions of graph toughness and $k$-factors in bipartite graphs through this incidence perspective.
To this end, we first introduce these  and related concepts for hypergraphs. Let $H$ be a hypergraph.
The notion of   \emph{Berge-graph}  was introduced by Gerbner and Palmer~\cite{GP2017}. Specifically, for a graph $G$,  a hypergraph  $H$   contains a \emph{Berge-$G$} if $V(G) \subseteq V(H)$ and
there exists an injection $\varphi: E(G) \rightarrow E(H)$ such that $e\subseteq \varphi(e)$ for every $e\in E(G)$. In other words, for a graph $G$ and a hypergraph $H$ with $V(G)\subseteq V(H)$, we say
that a subhypergraph $H_1$ of $H$ is a \emph{Berge}-$G$ if each hyperedge $S$ in $H_1$ can be mapped to an edge whose
endvertices are both contained in $S$, such that the resulting graph is $G$.
In particular,  for a given integer $k\ge 1$, if $H$ contains a   Berge-$k$-regular graph $G$ with $V(G)=V(H)$, then we say that $H$
has a \emph{Berge-$k$-factor}.  We  say $H$ is a \emph{complete hypergraph} if it contains a  Berge-complete graph $G$ with $V(G)=V(H)$.  Two vertices  $u,v$ of $H$
are \emph{connected} if  there is a  Berge-path connecting $u$ and $v$.
It is routine to check that this binary  relation is an equivalence relation on $V(H)$.
Thus, it partitions   $V(H)$ into subsets of equivalence classes.
Each subhypergraph of $H$ on the same equivalence class is a \emph{component} of $H$.
Denote by $c(H)$ the number of
components of $H$.
The hypergraph $H$  is \emph{connected} if $c(H)=1$.

Let $t\ge 0$ be a real number.  The  hypergraph $H$ is  \emph{$t$-tough} if for every subset $S\subseteq V(H)$ with $c(H-S)\ge 2$, the inequality $|S|\ge t\cdot
c(H-S)$ holds. The \emph {toughness $\tau(H)$} is defined as  the largest real number $t$ for which $H$ is
$t$-tough, or $\tau(H)=\infty$ if $H$ is complete. Introduced by  Chv\'{a}tal in 1973~\cite{C1973}, graph toughness has served as a cornerstone for establishing sufficient conditions for Hamiltonian cycles and  related substructures.

Now we  are ready to define notions of toughness and Berge-$k$-factor in  bipartite graphs through the incidence perspective.
Let $G[X,Y]$ be a bipartite graph with bipartition $(X,Y)$, where  $G$ has no isolated vertex  in $X$.  We will
assume throughout this paper  that elements of $Y$ correspond to the vertices of the  hypergraph  associated with $G$.  For $S\subseteq Y$, we define $G\ominus S=G-\left(S\cup (\bigcup_{v\in S}  N_G(v))\right)$, and call this a \emph{strong-deletion} of $S$.  This operation corresponds to deleting the set of vertices in $S$  and all
edges incident to them in the hypergraph  associated with $G$.
Notice that every component of $G\ominus S$ contains a vertex of $Y$ when $S \neq Y$ (so the component corresponds to a component of the hypergraph associated with $G$), as $G$
has no isolated vertices in $X$.  If $c(G\ominus S) \ge 2$, we call $S$ a \emph{$Y$-cutset} of $G$.
The \emph{$Y$-toughness} of $G$, denoted by $\tau_{Y}(G)$, is the maximum real number $t$ such that $|S|\geq t\cdot c(G\ominus S)$ for any $Y$-cutset $S$ if $S$ exists or is $\infty$ if $G$ has no $Y$-cutset. By construction,
the $Y$-toughness of $G$ is exactly the toughness of the hypergraph associated with $G$.
Let $k\geq 1$ be an integer. A $(\{0,2\},k)$-\emph{factor} of $G$ is a  subgraph $F\subseteq G$ such that for any $x\in X$, $d_{F}(x)\in \{0,2\}$ and for any $y\in Y$, $d_{F}(y)=k$.
From the definition, we see that  a Berge-$k$-factor of a hypergraph $H$ is a $(\{0,2\},k)$-factor of $\mathcal{I}(H)$ and vice versa.
Our main result is the following.

\begin{thm}\label{thm2}
	Let $G[X,Y]$ be a  bipartite graph with  no isolated vertices in $X$ and $k\geq 1$ be an integer. If $\tau_{Y}(G)\geq k$,  $k|Y|$ is even
	and  $|Y|\geq k+1$,  then $G$ has a  $(\{0,2\},k)$-factor.
\end{thm}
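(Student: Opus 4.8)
The plan is to prove Theorem~\ref{thm2} by the deficiency method of Enomoto--Jackson--Katerinis--Saito~\cite{EJKS1985}, transported to the bipartite setting through the parity-factor machinery developed in Section~2. A $(2,k)$-factor of $G[X,Y]$ is exactly a parity $(g,f)$-factor in which $g(x)=0$, $f(x)=2$ with even parity for every $x\in X$, and $g(y)=f(y)=k$ for every $y\in Y$. Invoking the bipartite deficiency criterion, such a factor exists if and only if
\[
\delta(S,T)=\sum_{v\in S} f(v)+\sum_{v\in T}\bigl(d_{G-S}(v)-g(v)\bigr)-q(S,T)\ \ge\ 0
\]
for all disjoint $S,T\subseteq X\cup Y$, where $q(S,T)$ counts the \emph{odd} components of $G-(S\cup T)$, and where a parity lemma forces $\delta(S,T)\equiv \sum_v f(v)\equiv k|Y|\pmod 2$. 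Since $k|Y|$ is even, every value of $\delta$ is even; so if $G$ had no $(2,k)$-factor I could fix a pair $(S,T)$ with $\delta(S,T)\le -2$, and among all such pairs choose one minimizing $|T|$ and then $|S|$. The rest of the argument is a structural analysis of this extremal pair aimed at contradicting $\tau_Y(G)\ge k$.

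First I would run the standard reductions, adapted to the asymmetric degree constraints. Writing $S=S_X\cup S_Y$ and $T=T_X\cup T_Y$, minimality of $(S,T)$ should force each $y\in T_Y$ to satisfy $d_{G-S}(y)\le k-1$, since otherwise relocating $y$ out of $T$ does not increase $\delta$ while lowering $|T|$; the effect on $q$ is controllable because each relocation alters the parity of only the components incident to the moved vertex. The $\{0,2\}$ constraint on $X$ is used to tame $T_X$ and $S_X$: an $X$-vertex contributes the nonnegative term $d_{G-S}(x)$ to $\delta$ when placed in $T$ and only $f(x)=2$ when placed in $S$, which lets me normalize how $X$-vertices enter the separator and account separately for the $X$-vertices isolated in $G-S-T$ (those with all neighbours in $S\cup T$). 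After these reductions the deficiency collapses to a clean expression in $|S_X|$, $|S_Y|$, $|T_Y|$, the incidences $\sum_{y\in T_Y} d_{G-S}(y)$, and the number $h$ of odd components of $G-S-T$.

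The crux, and the step I expect to be the main obstacle, is converting this factor-theoretic configuration into a genuine $Y$-cutset so that toughness applies. The difficulty is intrinsic to hypergraphs: the strong-deletion $G\ominus W$ removes a set $W\subseteq Y$ \emph{together with every incident $X$-vertex}, whereas the deficiency data come from deleting the mixed set $S\cup T\subseteq X\cup Y$; moreover a single $X$-vertex (a hyperedge) may, by non-uniformity, meet arbitrarily many components. My plan is to build $W$ from $S_Y\cup T_Y$ and then, for each separating $X$-vertex and each odd component, to augment $W$ by a controlled number of vertices drawn from the components, charging each bridging hyperedge to a bounded set of $Y$-vertices it meets so that distinct odd components of $G-S-T$ survive as distinct components of $G\ominus W$. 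If this can be arranged losing only a bounded number of $Y$-vertices per component, then $c(G\ominus W)$ is comparable to $h$ while $|W|$ is bounded in terms of $|S_Y|+|T_Y|+|S_X|$; feeding $|W|\ge k\,c(G\ominus W)$ back into the reduced deficiency expression should contradict $\delta(S,T)\le -2$. Making this charging scheme both correct and quantitatively tight—so the loss absorbed by $W$ is outweighed by the $k\,c(G\ominus W)$ gained—is the technical heart of the proof.

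Finally I would dispose of the boundary situations that fall outside toughness. When $G\ominus W$ fails to produce at least two components the toughness inequality is unavailable, and when $\tau_Y(G)=\infty$ (the associated hypergraph is complete) no $Y$-cutset exists at all; in these cases I would argue directly from $|Y|\ge k+1$, the evenness of $k|Y|$, and the high $Y$-connectivity forced by $\tau_Y(G)\ge k$. Likewise the degenerate case $S=\emptyset$ reduces $\delta(\emptyset,T)$ to $\sum_{y\in T_Y}(d_G(y)-k)-h$, which I would handle by treating $T_Y$ itself (after strong-deletion) as the cutset and combining the resulting bound with the parity constraint. Together these close the contradiction and establish Theorem~\ref{thm2}, hence Theorem~\ref{thm2:k-factor-in-hypergraph}.
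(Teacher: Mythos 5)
Your framework is the same as the paper's: translate a $(2,k)$-factor into a partial parity $(g,f)$-factor with respect to $X$, invoke the Kano--Matsuda criterion, use the parity of $k|Y|$ to make every deficiency even, fix an extremal barrier, and contradict $\tau_Y(G)\ge k$ by exhibiting a $Y$-cutset. But the proposal stops exactly where the proof has to start: you yourself label the conversion of the barrier into a $Y$-cutset as ``the technical heart'' and leave it as a plan. Worse, the plan as sketched aims at the wrong quantity. You propose to choose $W$ so that the odd components of $G-(S\cup T)$ survive as components of $G\ominus W$ and then compare $c(G\ominus W)$ with $h$. This cannot yield a contradiction in general: the deficiency $\delta(S,T)=\sum_{v\in S}f(v)-k|T_Y|+\sum_{v\in T}d_{G-S}(v)-h$ can be negative with $h=0$ (for instance $S=\emptyset$ and every $y\in T_Y$ of degree less than $k$), in which case there are no odd components to preserve and your toughness inequality is vacuous. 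The dominant negative term is $-k|T_Y|$, so the components one must manufacture in $G\ominus W$ are the vertices of $T_Y$ (the set $B$) themselves; the odd components meeting $B$ enter the argument only as \emph{costs} (one pays roughly $e_G(B,D_i)-1$ extra $Y$-deletions per such component, which is how the $-\ell$ term is recovered), while odd components disjoint from $B$ are a bonus. This is precisely what the paper's proof does: in Case $1$ it gets $c(G_1)\ge b-|B_1\cap B|+(\ell^*-|B_1^*|)$, and in Case $2$ it isolates the blocks $S_i$ of a carefully built partition of $B_2$ (Claims~\ref{claim1} and~\ref{claim2} give $c(G_2)\ge |S_i|$), after iteratively removing the vertices of $B$ with at least $k$ neighbours in $A_1$. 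None of that counting machinery, nor any substitute for it, appears in your proposal.

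A second, related gap: your extremal choice (among pairs with $\delta\le -2$, minimize $|T|$, then minimize $|S|$) does not deliver the structural facts the argument needs. The paper's biased barrier minimizes $\delta$, then minimizes $|B|$, then \emph{maximizes} $|A|$, and it is the maximality of $|A|$ that proves Theorem~\ref{lem2.2}(ii)--(iv): one shows $\delta_G(A\cup\{v\},B)\le\delta_G(A,B)$ for a vertex $v$ of an odd component with $e_G(v,B)\ge 2$, and $h(Z)\ge 2|Z|$ for $Z\subseteq A\cap X$ with no neighbours in $B$, contradicting the choice of $(A,B)$. With $|S|$ minimized these moves produce no contradiction, so the key facts $B\subseteq Y$, $e_G(v,B)\le 1$ on odd components, and the $h(Z)\ge 2|Z|$ bound (which is indispensable for handling the $X$-vertices of the separator, i.e., the hyperedges that survive strong-deletion) are not available to you. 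As written, the proposal is a correct identification of the strategy but not a proof, and its central construction would have to be redesigned along the lines above before it could be completed.
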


Theorem~\ref{thm2} gives a sufficient  toughness condition for  the existence of Berge-$k$-factors in hypergraphs.

\begin{coro}\label{thm2:k-factor-in-hypergraph}
	For any integer $k\ge 1$,  every $k$-tough hypergraph $H$ has a Berge-$k$-factor if $k|V(H)|$ is even and $|V(H)|\geq k+1$.
\end{coro}

Corollary~\ref{thm2:k-factor-in-hypergraph} implies the celebrated theorem by Enomoto, Jackson, Katerinis, and Saito
from 1985~\cite{EJKS1985}:  For any integer $k\ge 1$,  a $k$-tough graph $G$ has a $k$-factor if $k|V(G)|$ is even  and $|V(G)|\geq k+1$.
The toughness condition in Corollary~\ref{thm2:k-factor-in-hypergraph} is  sharp when $G$ is
a graph, as proved in~\cite{EJKS1985}  by Enomoto, Jackson, Katerinis, and Saito.

The proof of this extension presents a unique challenge due to the inherent non-uniformity of hypergraphs compared to graphs. To overcome this, we first adapt traditional graph-theoretic tools for finding factors to the context of  bipartite graphs. Subsequently, we develop a novel approach for identifying cutsets within hypergraphs. The details of the modified factor results are presented in Section 2, while Section 3 gives  the proof of Theorem~\ref{thm2}.

 \section{Notation and parity factors in bipartite graphs}

To prove Theorem~\ref{thm2},  we wish to find a subgraph $F$ of $G$ such that either $d_F(x)=0$  or $d_F(x)=2$
for each $x\in X$, and $d_F(y)=k$ for each $y\in Y$.  Such a construction is usually done by
utilizing the concept of $(g,f)$-factors,  where $g$ and $f$  are integer-valued functions on $V(G)$
and $g(v)$ and $f(v)$ respectively serve as  a lower and upper bound on the degree of each vertex $v$ in $F$.
Since we require $d_F(x)\in\{0,2\}$ (not allowing $d_F(x)=1$),  we will require
$g(x)=0$, $f(x)=2$, and $d_F(x)$ to have the same parity as $g(x)$ and $f(x)$
for all $x\in X$. This leads to the concept of \emph{parity $(g,f)$-factors}.

Let $G$ be a graph. For a given subset $W\subseteq V(G)$, let $g,f:V(G)\rightarrow \mathbb{Z}$ be two functions such that $g(v)\leq f(v)$ for all $v\in V(G)$ and $g(v)\equiv f(v)$ (mod 2) for all $v\in W$. Then a spanning subgraph $F$ of $G$ is called a \emph{partial parity $(g,f)$-factor} with respect to $W$ if
\begin{center}
\begin{enumerate}[(i)]\item $g(v)\leq d_{F}(v)\leq f(v)$ for all $v\in V(G)$ and
\item $d_{F}(v)\equiv g(v)\equiv f(v)$ (mod 2) for all $v\in W$.
\end{enumerate}
\end{center}

Kano and Matsuda~\cite{KM2001} gave a necessary and sufficient condition for a graph to have a partial parity $(g,f)$-factor.

\begin{thm}[\cite{KM2001}]\label{partial} Let $G$ be a graph, $W\subseteq V(G)$, and  $g,f:V(G)\rightarrow \mathbb{Z}$ be two functions satisfying
\[\text{$g(v)\leq f(v)$ for all   $v\in V(G)$  and   $g(v)\equiv f(v)  \pmod{2}$ for all $v\in W$}.\]
Then $G$ has  a partial parity $(g,f)$-factor with respect to $W$ if and  only if for all disjoint subsets $A$ and $B$ of $V(G)$,
\begin{equation}\label{eq1} \delta_{G}(A,B):=\sum\limits_{v\in A} f(v)-\sum\limits_{v\in B}g(v)+\sum\limits_{v\in B} d_{G-A}(v)-h_{W}(A,B)\geq 0,
\end{equation}
where $h_{W}(A,B)$ denotes the number of components $D$ of $G-(A\cup B)$ such that $g(v)=f(v)$ for all  $v\in V(D)\backslash W$ and
\begin{equation}\label{eq2} \sum\limits_{v\in V(D)}f(v)+e_{G}(D,B)\equiv 1 \pmod{2}.
\end{equation}
\end{thm}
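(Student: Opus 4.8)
The plan is to establish the two implications of Theorem~\ref{partial} separately. Necessity is a direct edge-counting argument on an arbitrary factor, while sufficiency I would obtain by reducing to the classical parity $(g,f)$-factor theorem of Lov\'asz, i.e. the case in which a genuine parity constraint is imposed at \emph{every} vertex ($W=V(G)$) rather than only on $W$.

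\emph{Necessity.} Suppose $F$ is a partial parity $(g,f)$-factor with respect to $W$, and fix disjoint $A,B\subseteq V(G)$. Writing the degrees of $F$ across the partition $V(G)=A\cup B\cup\big(\bigcup_C V(C)\big)$, where $C$ ranges over the components of $G-(A\cup B)$, and using $d_F(v)\le f(v)$ on $A$ together with $g(v)\le d_F(v)$ on $B$, I would expand $\delta_G(A,B)$ into a sum of manifestly nonnegative terms (coming from the two degree bounds, from edges of $F$ inside $A$, and from $G$-edges inside $B$ unused by $F$) plus the per-component contributions $e_F(A,C)+\big(e_G(C,B)-e_F(C,B)\big)$, minus $h_W(A,B)$. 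The key local claim is that every component $C$ counted by $h_W(A,B)$ contributes at least $1$. Indeed, on such a $C$ we have $d_F(v)\equiv f(v)\pmod 2$ at every vertex: on $W$ by condition (ii) of the factor, and on $V(C)\setminus W$ because there $g=f$, so $g(v)\le d_F(v)\le f(v)$ forces $d_F(v)=f(v)$. Hence $\sum_{v\in V(C)}d_F(v)\equiv\sum_{v\in V(C)}f(v)\pmod 2$; were the contribution $0$, then $e_F(A,C)=0$ and $e_F(C,B)=e_G(C,B)$, giving $\sum_{v\in V(C)}d_F(v)\equiv e_G(C,B)$ and therefore $\sum_{v\in V(C)}f(v)+e_G(C,B)\equiv 0\pmod 2$, contradicting \eqref{eq2}. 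Summing the per-component contributions against $h_W(A,B)$ then yields $\delta_G(A,B)\ge 0$.

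\emph{Sufficiency.} This is the substantial direction. For each $v\in V(G)\setminus W$ the admissible degree set $\{g(v),g(v)+1,\dots,f(v)\}$ carries no parity restriction, whereas at $v\in W$ it is $\{g(v),g(v)+2,\dots,f(v)\}$; both are degree-constraint sets with gaps of length at most one, so the problem is an instance covered by Lov\'asz's general factor machinery. Concretely, I would attach to each $v\notin W$ a small gadget (for instance a pendant vertex together with suitably adjusted values of $g$ and $f$) whose only purpose is to absorb one unit of parity, thereby converting the free-parity requirement at $v$ into a true parity requirement in an augmented graph $G'$, and then apply the full-parity $(g,f)$-factor theorem to $G'$.

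The step I expect to be the main obstacle is verifying that Lov\'asz's deficiency condition for $G'$ translates back exactly to the inequality \eqref{eq1} for $G$. There the obstruction is measured by $\eta_{G'}(S,T)=\sum_{v\in S}f(v)+\sum_{v\in T}\big(d_{G'-S}(v)-g(v)\big)-q(S,T)$, where $q(S,T)$ counts the components $C$ of $G'-(S\cup T)$ with $\sum_{v\in V(C)}f(v)+e_{G'}(C,T)$ odd. I would need to (a) show that an extremal violating pair $(S,T)$ may be chosen with $S,T\subseteq V(G)$ and each gadget lying wholly inside or wholly outside $T$, (b) set $A=S$ and $B=T$, and (c) check that, after this identification, a component is odd for $G'$ precisely when it is counted by $h_W(A,B)$, i.e. when $g=f$ off $W$ on it and \eqref{eq2} holds; the components on which $g\neq f$ somewhere off $W$ are exactly those whose gadgets prevent any parity obstruction, which is why they are excluded from $h_W$. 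Establishing this correspondence gives $\delta_G(A,B)=\eta_{G'}(S,T)$ and hence the equivalence of the two existence statements. A self-contained alternative would be an extremal argument: among spanning subgraphs obeying the upper bounds and the parity on $W$, take one minimizing the total lower-deficiency, and, if it fails to be a factor, run an alternating-path search from a deficient vertex to read off the pair $(A,B)$ with $\delta_G(A,B)<0$; the delicate point there is again the parity bookkeeping that produces the term $h_W(A,B)$.
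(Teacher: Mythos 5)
First, a point of comparison: the paper does not prove Theorem~\ref{partial} at all. It is quoted as a black box from Kano and Matsuda~\cite{KM2001}, so your attempt can only be judged on its own correctness. Your necessity direction is correct and is the standard counting argument: expanding $\delta_G(A,B)$ after replacing $\sum_{v\in A}f(v)$ by $\sum_{v\in A}d_F(v)$ and $\sum_{v\in B}g(v)$ by $\sum_{v\in B}d_F(v)$ leaves nonnegative terms plus the per-component quantities $e_F(A,C)+\bigl(e_G(C,B)-e_F(C,B)\bigr)$, and your parity argument correctly shows each component counted by $h_W(A,B)$ contributes at least $1$.

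The sufficiency direction, however, has a genuine gap, and it sits earlier than the step you flag as the main obstacle. The pendant-vertex gadget cannot "absorb one unit of parity." In a full-parity instance (every vertex of the augmented graph $G'$ parity-constrained), for any factor $F$ and any vertex subset $S$ one has $e_F(S,V(G')\setminus S)\equiv\sum_{v\in S}d_F(v)\equiv\sum_{v\in S}f(v)\pmod 2$, because $\sum_{v\in S}d_F(v)=2|E(F[S])|+e_F(S,V(G')\setminus S)$ and each $d_F(v)$ has prescribed parity. Applied to a gadget $S$ whose only edges into the rest of $G'$ go to the single vertex $v$, this says the number of gadget--$v$ edges used by \emph{any} factor has a fixed parity: a pendant vertex (or any one-point-of-attachment gadget) offers no choice at all, let alone a choice between $0$ and $1$ extra edge at $v$. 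So the proposed reduction to Lov\'asz's full-parity theorem breaks at the construction of $G'$ itself: any working gadget must attach to at least two original vertices, and then one must contend with the global constraint that $\sum_v d_F(v)$ is always even, which obstructs giving every $v\notin W$ independent parity freedom by naive means. This design problem is precisely the substance of the reduction, and the proposal does not solve it; the subsequent translation steps (a)--(c), and the alternative alternating-path argument, are likewise only named rather than carried out, and the parity bookkeeping producing $h_W(A,B)$ — which you yourself identify as the delicate point — is exactly where the difficulty of the theorem lives. As written, the sufficiency direction is not a proof.
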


Following the notation in Theorem~\ref{partial}, a component $D$ of $G-(A\cup B)$ with  $g(v)=f(v)$ for all  $v\in V(D)\backslash W$ is called an \emph{$f$-odd component} if it satisfies (\ref{eq2}) and is called an \emph{$f$-even component} if $$\sum\limits_{v\in V(D)}f(v)+e_{G}(D,B)\equiv 0 \pmod{2}.$$

Let $G[X,Y]$ be a bipartite graph and $k\ge 1$ be an integer, and let
$f,g:V(G)\rightarrow \mathbb{Z}$ be \begin{equation}\label{eq3}f(v)=\left\{\begin{array}{cc}
		2, & v\in X; \\
		k, & v\in Y;
	\end{array}
	\right. \quad \text{and} \quad
	g(v)=\left\{\begin{array}{cc}
		0, & v\in X; \\
		k, & v\in Y.
	\end{array}
	\right.\end{equation}
Then a  partial parity $(g,f)$-factor  with respect to $X$   is a $(\{0,2\},k)$-factor in  $G$.

Applying
Theorem~\ref{partial},  if a bipartite graph $G[X,Y]$ does not  have a partial parity $(g,f)$-factor
with respect to $X$,
then there exist disjoint sets $A,B\subseteq V(G)$ for which $\delta_{G}(A,B)<0$.
We call $(A,B)$ a \emph{barrier} of $G$.
A \emph{biased barrier} is a barrier $(A,B)$ such that among all the
barriers, we have (i) $\delta_G(A,B)$ is minimum,  (ii) subject to (i) $|B|$ is minimum,  and  (iii)   subject to (i)   and (ii) $|A|$ is maximum.  Let $(A,B)$ be a biased barrier of $G$.
For $Z\subseteq A\cap X$, let
\begin{equation}\label{eq-hz}q(Z)=|N_G(Z)\cap B|+|\{D: \text{$D$ is an $f$-odd component of $G-(A\cup B)$ and $e_G(Z,D) \ge 1$}\}|.
\end{equation}
When $Z=\{z\}$,
we simply write $q(z)$ for $q(\{z\})$.
We study the structure of a biased barrier in Theorem~\ref{lem2.2},
which is of independent interest.
We first show  that for the partial parity $(g,f)$-factor with respect to $X$
defined in~\eqref{eq3}, we have $\delta_{G}(A,B)\equiv 0 \pmod{2}$  for all disjoint subsets $A, B$ of  $V(G)$
provided that $k|Y|$ is even.

%
\begin{lem}\label{lem:delta-A-B-even}
Let $G[X,Y]$ be a bipartite graph and $k\ge 1$ be an integer, and let
 $W=X$ and $f,g$ be defined in~\eqref{eq3}. If $k|Y|$ is even, then
  $\delta_{G}(A,B)\equiv 0 \pmod{2}$ for all disjoint subsets $A,B\subseteq V(G)$.
\end{lem}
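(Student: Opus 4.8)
The plan is to verify the congruence by computing $\delta_{G}(A,B)$ modulo $2$ directly from its definition in~\eqref{eq1}, reducing each of its four summands using the explicit values of $f$ and $g$ in~\eqref{eq3}. Write $A_Y=A\cap Y$ and $B_Y=B\cap Y$. Since $f\equiv 0\pmod 2$ on $X$ and $f=g=k$ on $Y$, the first two terms collapse immediately to $\sum_{v\in A}f(v)\equiv k|A_Y|$ and $-\sum_{v\in B}g(v)\equiv k|B_Y|\pmod 2$. The real content lies in comparing the degree term $\sum_{v\in B}d_{G-A}(v)$ with the odd-component count $h_{W}(A,B)$.

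For the degree term, a handshake argument over $B$ inside the graph $G-A$ shows that $\sum_{v\in B}d_{G-A}(v)$ equals twice the number of edges lying inside $B$ plus the number of edges joining $B$ to the components of $G-(A\cup B)$; hence modulo $2$ it is congruent to $\sum_{D}e_{G}(D,B)$, the total number of edges between $B$ and all components $D$. For $h_{W}(A,B)$ the key observation — and the point I expect to be the crux — is that the first defining condition of $h_{W}$ in Theorem~\ref{partial}, namely $g(v)=f(v)$ for all $v\in V(D)\setminus W$, holds vacuously here: since $W=X$, the set $V(D)\setminus W$ consists only of $Y$-vertices, on which $g=f=k$. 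Consequently $h_{W}(A,B)$ counts precisely the odd components of $G-(A\cup B)$, those satisfying~\eqref{eq2}, so that
$$h_{W}(A,B)\equiv \sum_{D}\big(k|D\cap Y|+e_{G}(D,B)\big)\pmod 2,$$
the sum ranging over all components $D$, because $\sum_{v\in V(D)}f(v)\equiv k|D\cap Y|\pmod 2$ and a component contributes $1$ to the right-hand side exactly when it is odd.

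It then remains to evaluate the two sums over components. Since the $Y$-vertices are partitioned among $A$, $B$, and the components, $\sum_{D}|D\cap Y|=|Y|-|A_Y|-|B_Y|$, giving $\sum_{D}k|D\cap Y|\equiv k|Y|+k|A_Y|+k|B_Y|\pmod 2$; and $\sum_{D}e_{G}(D,B)$ is exactly the degree-term contribution identified above. Substituting everything back into $\delta_{G}(A,B)$, the two copies of $\sum_{D}e_{G}(D,B)$ cancel, as do the paired $k|A_Y|$ and $k|B_Y|$ contributions, leaving $\delta_{G}(A,B)\equiv k|Y|\pmod 2$. Since $k|Y|$ is assumed even, this yields $\delta_{G}(A,B)\equiv 0\pmod 2$, as required.

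The only genuinely delicate points are the bookkeeping around $h_{W}$: one must confirm that no component is disqualified by the parity-matching condition $g(v)=f(v)$ so that the clean ``sum over all components'' identity is valid, and one must check that the contribution of $e_{G}(D,B)$ appears with matching parity in both $h_{W}(A,B)$ and $\sum_{v\in B}d_{G-A}(v)$, ensuring these two edge-counts cancel rather than reinforce. Everything else is a routine parity reduction.
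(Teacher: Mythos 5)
Your proof is correct and follows essentially the same route as the paper's: both exploit that $W=X$ makes the condition $g(v)=f(v)$ on $V(D)\setminus W$ vacuous, so $h_W(A,B)$ reduces modulo $2$ to $\sum_{D}\bigl(\sum_{v\in V(D)}f(v)+e_G(D,B)\bigr)$, then cancel this edge count against the handshake expansion of $\sum_{v\in B}d_{G-A}(v)$ and finish with a parity count that leaves $k|Y|$. The only difference is cosmetic bookkeeping (you track $Y$-vertex counts across the partition of $Y$, while the paper writes $\sum_{x\in U}f(x)$ as a difference of $f$-sums), so the two arguments coincide.
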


\proof Let  $U=V(G)\setminus (A\cup B)$. Since $W=X$, by the definition of $f$ and $g$,
for any component $D$ of $G-(A\cup B)$,
we have  $f(v)=g(v)$ for all $v\in V(D)\setminus W$. Thus
$$h_W(A,B)= \sum\limits_{\text{$D$ is a component of $G-(A\cup B)$}} \left( \left(\sum\limits_{x\in V(D)}f(x)+e_{G}(D,B)\right) \pmod{2}\right ) ,$$
where $\left(\sum\limits_{x\in V(D)}f(x)+e_{G}(D,B)\right) \pmod{2}$ is taken to be either 0 or 1.  Therefore,  $h_W(A,B) \equiv \sum\limits_{x\in U} f(x)+e_G(B,U) \pmod{2}$, and so
\begin{eqnarray*}
	\delta_{G}(A,B)& \equiv &\sum\limits_{x\in A} f(x)-\sum\limits_{y\in B}g(y)+\sum\limits_{y\in B} d_{G-A}(y)-h_{W}(A,B)\\
	&\equiv & \sum\limits_{x\in A} f(x)-\sum\limits_{y\in B}g(y)+2|E(G[B])|+e_G(B,U)-\sum\limits_{x\in U} f(x)-e_G(B,U)\\
	&\equiv & \sum\limits_{x\in A} f(x)-\sum\limits_{y\in B}g(y)-\left(\sum\limits_{x\in V(G)} f(x)-\sum\limits_{x\in A} f(x)-\sum\limits_{y\in B} f(y)\right) \\
	&&(\text{by}\quad 2|E(G[B])| \equiv 0\pmod{2})	  \\
	&\equiv & -\sum\limits_{y\in B}g(y)-\sum\limits_{x\in V(G)}f(x)+\sum\limits_{y\in B} f(y) \\
	&\equiv& -k|B\cap Y|-(2|X|+k|Y|)+(2|B\cap X|+k|B\cap Y|) \\
	&\equiv & 0 \pmod{2},
\end{eqnarray*}
where the equation in the last line follows from the assumption $k|Y|\equiv 0 \pmod{2}$.
\qed

\begin{thm}\label{lem2.2} Let $G[X,Y]$ be a bipartite graph and $k\geq 1$ be an integer. Suppose that  $k|Y|$ is even and $G$
	does not have a partial parity $(g,f)$-factor with respect to $W=X$, where $g$ and
	$f$ are defined in~\eqref{eq3}.
Let $(A,B)$ be  a biased barrier of $G$.  Then the following statements hold:
\begin{enumerate}[\rm(i)]
	\item $B\subseteq Y$;
	\item For every $f$-odd component $D$ of $G-(A\cup B)$ and $v\in V(D)$, $e_{G}(v,B)\leq 1$;
\item For every $f$-even component $D$ of $G-(A\cup B)$ and $v\in V(D)$, $e_{G}(v,B)=0$;
\item For any $Z\subseteq A\cap X$ with $N_G(Z)\cap B=\emptyset$, we have $q(Z) \ge 2|Z|$.
\end{enumerate}
\end{thm}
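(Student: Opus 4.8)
I would prove all four statements by contradiction, in each case exploiting the three levels of optimality built into a biased barrier: $\delta_G(A,B)$ is minimum, then $|B|$ is minimum, then $|A|$ is maximum. Throughout set $U=V(G)\setminus(A\cup B)$, so that the components of $G-(A\cup B)$ lie in $U$; recall from Lemma~\ref{lem:delta-A-B-even} that $\delta_G$ is always even, whence $\delta_G(A,B)\le -2$. The engine is a short list of elementary-move computations: relocate a single vertex $v$ (or a whole set $Z$) among $A$, $B$, $U$ and track the resulting change $\Delta\delta$ across the four terms $\sum_{v\in A}f(v)$, $-\sum_{v\in B}g(v)$, $\sum_{v\in B}d_{G-A}(v)$ and $-h_W(A,B)$ of $\delta_G$. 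The only delicate term is $h_W(A,B)$, the number of odd components; I would control its change either by a merging estimate (a single vertex fuses at most as many components of $G[U]$ as the number of edges it sends into $U$) or by a parity count on the pieces into which deleting a vertex splits a component. Writing $[\,\cdot\,]$ for the indicator that is $1$ on odd components and $0$ otherwise, a move with $\Delta\delta<0$ contradicts minimality of $\delta_G$, while a move with $\Delta\delta=0$ contradicts whichever secondary condition it improves.

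For (i), suppose $x\in B\cap X$ and relocate $x$ from $B$ into $U$. Since $g(x)=0$ the term $-\sum_{v\in B}g(v)$ is unchanged, and dropping $x$ from $B$ lowers $\sum_{v\in B}d_{G-A}(v)$ by exactly $d_{G-A}(x)$. In $G[U\cup\{x\}]$ the vertex $x$ fuses the components $D_1,\dots,D_m$ of $G[U]$ adjacent to it into one new component $D^*$, and a short parity computation gives $\Delta h_W=[D^*\text{ odd}]-q$, where $q$ is the number of odd $D_i$. Since $q\le m\le e_G(x,U)\le d_{G-A}(x)$, we obtain $\Delta\delta = q-d_{G-A}(x)-[D^*\text{ odd}]\le 0$. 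A strict inequality contradicts minimality of $\delta_G$; equality keeps $\delta_G$ minimum while lowering $|B|$, contradicting minimality of $|B|$. Hence $B\subseteq Y$, and then $e_G(v,B)=0$ for every $v\in Y$ because $G$ is bipartite.

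For (ii) and (iii) I would instead push an offending $X$-vertex into $A$. Let $v\in V(D)\cap X$ for a component $D$ of $G-(A\cup B)$; moving $v$ to $A$ raises $\sum_{v\in A}f(v)$ by $2$, lowers $\sum_{v\in B}d_{G-A}(v)$ by $e_G(v,B)$, and replaces $D$ by the components $D'_1,\dots,D'_s$ of $D-v$, so that $\Delta\delta = 2-e_G(v,B)+[D\text{ odd}]-\sum_{j}[D'_j\text{ odd}]$. The key parity identity is $\sum_{j}[D'_j\text{ odd}]\equiv [D\text{ odd}]+e_G(v,B)\pmod 2$. If $D$ is odd and $e_G(v,B)\ge 2$, this forces $\sum_j[D'_j\text{ odd}]$ to be odd, hence $\ge 1$, in the tight case $e_G(v,B)=2$, and for larger $e_G(v,B)$ the negative terms dominate, so $\Delta\delta\le 0$; if $D$ is even and $e_G(v,B)\ge 1$ the same bookkeeping again gives $\Delta\delta\le 0$. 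This move fixes $|B|$ and raises $|A|$, so a strict decrease contradicts minimality of $\delta_G$ and equality contradicts maximality of $|A|$. As $e_G(v,B)=0$ for $v\in Y$, this proves (ii) and (iii).

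Finally, for (iv) take $Z\subseteq A\cap X$ with $N_G(Z)\cap B=\emptyset$ and relocate all of $Z$ from $A$ into $U$. The hypothesis $N_G(Z)\cap B=\emptyset$ keeps $\sum_{v\in B}d_{G-A}(v)$ invariant and kills the first summand of $h(Z)$, so that exactly $h(Z)$ odd components of $G[U]$ meet $Z$; in $G[U\cup Z]$ these are absorbed into new components, of which some number $o'\ge 0$ are odd, giving $\Delta\delta = -2|Z|+h(Z)-o'\le -2|Z|+h(Z)$. If $h(Z)<2|Z|$ then $\Delta\delta<0$, contradicting minimality of $\delta_G$, so $h(Z)\ge 2|Z|$. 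I expect the main obstacle to be precisely the disciplined accounting of odd components under these gluings and splits --- establishing the merging inequality and the two parity identities above. Once those are in place, each of the four parts collapses to a one-line sign check against the appropriate level of the biased-barrier optimality.
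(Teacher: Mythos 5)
Your proposal is correct and takes essentially the same approach as the paper: each part is proved by a single exchange move (delete an offending vertex from $B$, add an offending vertex to $A$, or delete $Z$ from $A$), bounding the change in $h_W$ by a merging/splitting estimate and then contradicting the appropriate level of the biased-barrier optimality (minimality of $\delta_G$, then of $|B|$, then maximality of $|A|$). The only deviation is in (ii)/(iii), where you derive $\sum_j [D'_j \text{ odd}] \equiv [D\text{ odd}] + e_G(v,B) \pmod 2$ locally on the split components, whereas the paper gets the same effect by invoking the global evenness of $\delta_G(A\cup\{v\},B)$ from Lemma~\ref{lem:delta-A-B-even}; both parity arguments are valid and interchangeable here.
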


\begin{proof}
	Let  $U=V(G)\setminus (A\cup B)$.
For (i), suppose that there exists a vertex $v\in B\cap X$.
As  $h_{W}(A,B\setminus\{v\})\geq h_{W}(A,B)-e_{G}(v,U)$, we get
\begin{eqnarray*}
\delta_{G}(A,B\setminus\{v\})&=&\sum\limits_{x\in A} f(x)-\sum\limits_{y\in B\setminus\{v\}}g(y)+\sum\limits_{y\in B\setminus\{v\}} d_{G-A}(y)-h_{W}(A,B\setminus\{v\})\\
&\leq &\sum\limits_{x\in A} f(x)-\sum\limits_{y\in B}g(y)+g(v)+\sum\limits_{y\in B} d_{G-A}(y)-e_{G}(v,B\setminus\{v\})-h_{W}(A,B)\\
&=&\delta_{G}(A,B)+g(v)-e_{G}(v,B\setminus \{v\})\\
& \le  &\delta_{G}(A,B),
\end{eqnarray*}
as $g(v)=0$.  Thus $(A,B\setminus\{v\})$ is a barrier of $G$ with $\delta_{G}(A,B\setminus\{v\}) \le \delta_{G}(A,B)$
and with $|B\setminus \{v\}|<|B|$. This gives
 a contradiction to the choice of $(A,B)$.

We prove statements (ii) and (iii) together. Let $D$ be a component of $G-(A\cup B)$.
As $G$ is bipartite, $e_{G}(v,B)=0$ if $v\in V(D)\cap Y$. So assume that $v\in V(D)\cap X$. By the assumption that $(A,B)$ is a biased barrier and by Lemma~\ref{lem:delta-A-B-even}, we know that $\delta_{G}(A\cup \{v\},B)\geq \delta_{G}(A,B)+2$. Therefore,
\begin{eqnarray*}
 &&\delta_{G}(A,B)+2\leq\delta_{G}(A\cup \{v\},B)\\&=&\sum\limits_{x\in A\cup \{v\}} f(x)-\sum\limits_{y\in B}g(y)+\sum\limits_{y\in B} d_{G-(A\cup \{v\})}(y)-h_{W}(A\cup \{v\},B)\\
&=& \sum\limits_{x\in A} f(x)+f(v)-\sum\limits_{y\in B}g(y)+\sum\limits_{y\in B} d_{G-A}(y)-e_{G}(v,B)-h_{W}(A\cup \{v\},B)\\
&  \le & \begin{cases}
\sum\limits_{x\in A} f(x)+f(v)-\sum\limits_{y\in B}g(y)+\sum\limits_{y\in B} d_{G-A}(y)-e_{G}(v,B)-h_{W}(A,B)+1 &  \text{if $D$ is $f$-odd}; \nonumber \\
\sum\limits_{x\in A} f(x)+f(v)-\sum\limits_{y\in B}g(y)+\sum\limits_{y\in B} d_{G-A}(y)-e_{G}(v,B)-h_{W}(A,B) &  \text{if $D$ is $f$-even}; \nonumber
\end{cases} \\
&=&\begin{cases}
\delta_{G}(A,B)+2-e_{G}(v,B)+1&  \text{if $D$ is $f$-odd};\\
\delta_{G}(A,B)+2-e_{G}(v,B)&  \text{if $D$ is $f$-even}.
\end{cases}
\end{eqnarray*}
We obtain that
  $e_{G}(v,B)\leq 1$ when $D$ is an $f$-odd component  of $G-(A\cup B)$ and
 $e_{G}(v,B)=0$ when $D$ is an $f$-even component of $G-(A\cup B)$.

 To prove (iv), suppose to the contrary that $q(Z) \le  2|Z|-1$. By the assumption that $(A,B)$ is a biased barrier, we know that $\delta_{G}(A\setminus Z,B) \ge \delta_G(A,B)$. As $h_W(A\setminus Z,B) \ge h_W(A,B)-q(Z) $ by the definition of $q(Z)$ in Equation~\eqref{eq-hz} and $e_G(B,Z)=0$ ($N_G(Z)\cap B=\emptyset$), we get
 \begin{eqnarray*}
 \delta_G(A,B) &\le& \delta_{G}(A\setminus Z,B) =\sum\limits_{x\in A} f(x)-2|Z|-\sum\limits_{y\in B}g(y)+\sum\limits_{y\in B} d_{G-A}(y)-h_{W}(A\setminus Z,B)\\
 	& \le & \delta_G(A,B)-2|Z|+q(Z)\\
 	&\le & \delta_G(A,B)-1,
 \end{eqnarray*}
a contradiction.
\end{proof}

\section{Proof of Theorem~\ref{thm2}}

Assume to the contrary that $G$ has no $(\{0,2\},k)$-factor.  Thus for $W=X$
and $f$ and $g$ defined as in~\eqref{eq3},  $G$
has a biased barrier $(A,B)$ by Theorem~\ref{partial}.
Since $W=X$, by the definitions of $g$ and $f$,
a component $D$ of $G-(A\cup B)$ is $f$-odd ($f$-even)
if $k|V(D)\cap Y|+e_G(D,B)$  is odd (even).

If $G$ has no $Y$-cutset, let $S$ be any subset of $Y$ with $|S|=|Y|-2$,  and  let $Y\setminus S=\{u,v\}$. Then $u$ and $v$ belong to the same component in $G\ominus S$. Thus for any two distinct vertices $u,v\in Y$, there is a  vertex $x\in X$ such that $x$ is adjacent in $G$ only to $u$ and $v$.  For any vertex $x\in X$ with $d_G(x)=2$, if we remove $x$ from $G$ and
add a new edge joining the two vertices in $N_G(x)$, then we get a graph $G'$ such that $G'[Y]$ contains a spanning complete subgraph. Thus $G'$ has a $k$-factor and so $G$ contains a $(\{0,2\},k)$-factor.  Therefore we assume that $G$ has a $Y$-cutset. Our goal is
to find a $Y$-cutset that violates the $Y$-toughness of $G$.
Under the assumption that $G$ has a $Y$-cutset, we show in the claim below that the size of $Y$ is relatively large.

\begin{claim}\label{claim:Y-size}
	We have $|Y| \ge 2k+2$.
\end{claim}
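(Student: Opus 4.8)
The plan is to play the $Y$-toughness hypothesis directly against the $Y$-cutset that we now have by assumption. Fix a $Y$-cutset $S$ of $G$, so that $c(G\ominus S)\ge 2$, and abbreviate $c=c(G\ominus S)$. Since $\tau_{Y}(G)\ge k$, the defining inequality of $Y$-toughness applied to $S$ yields $|S|\ge k\cdot c$. This is the first of the two estimates I want.

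The second step is to bound $|Y\setminus S|$ from below by counting the $Y$-vertices that survive the strong-deletion. Because $S\subseteq Y$ and $G$ is bipartite, every neighbour of $S$ lies in $X$; hence $G\ominus S=G-(S\cup N_G(S))$ deletes from $Y$ only the vertices of $S$ itself, so all of $Y\setminus S$ remains among its vertices. By the observation recorded when strong-deletion was introduced, every component of $G\ominus S$ contains at least one vertex of $Y$, precisely because $G$ has no isolated vertices from $X$. Distinct components contribute distinct such $Y$-vertices, and each of these lies in $Y\setminus S$. Therefore $|Y\setminus S|\ge c$.

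Combining the two bounds and using $S\subseteq Y$,
$$|Y|=|S|+|Y\setminus S|\ge kc+c=(k+1)c\ge 2(k+1)=2k+2,$$
where the final inequality invokes $c\ge 2$. This gives the claimed lower bound on $|Y|$.

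The argument is short, and its only delicate point is the second step: one must be sure that each of the at least two components of $G\ominus S$ genuinely contributes a \emph{distinct} vertex of $Y$ lying \emph{outside} $S$. This is exactly where the standing hypothesis that $G$ has no isolated vertices from $X$ enters, together with the remark that $S\subseteq Y$ forces $N_G(S)\subseteq X$ so that no $Y$-vertex other than those in $S$ is removed by the strong-deletion.
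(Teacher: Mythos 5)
Your proof is correct, and it takes a genuinely different --- and more direct --- route than the paper's. The paper argues by contradiction: assuming $|Y|\le 2k+1$, it uses the existence of a $Y$-cutset to produce two vertices $y_1,y_2\in Y$ with no $x\in X$ satisfying $N_G(x)=\{y_1,y_2\}$, and then builds an explicit set $S=N_{G_1}(N_{G_1}(y_1))\setminus\{y_1\}$ (where $G_1=G\ominus\{y_2\}$) of size at most $2k-1$ which is claimed to separate $y_1$ from $y_2$ in $G\ominus S$, violating $\tau_{Y}(G)\ge k$. You instead take an \emph{arbitrary} $Y$-cutset $S$, apply the toughness inequality $|S|\ge k\cdot c(G\ominus S)$ directly, and combine it with the Section~2 observation that every component of a strong-deletion contains a vertex of $Y$; since $S\subseteq Y$ forces $N_G(S)\subseteq X$, these component representatives are distinct vertices of $Y\setminus S$, so $|Y\setminus S|\ge c(G\ominus S)$ and hence $|Y|\ge (k+1)\,c(G\ominus S)\ge 2k+2$. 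This is the exact hypergraph analogue of the classical fact that a $t$-tough non-complete graph has at least $2t+2$ vertices. Your route buys simplicity and robustness: it avoids having to certify that the paper's constructed $S$ really disconnects $y_1$ from $y_2$, a verification whose stated justification only covers vertices $x'$ with $x'y_1\in E(G)$ that survive into $G_1$, and does not address an $x'\in X$ adjacent to both $y_1$ and $y_2$ with $d_G(x')\ge 3$ (such a vertex is deleted in forming $G_1$, so its other neighbors need not land in $S$). The paper's construction produces an explicit separating pair and cutset, but nothing later in the proof uses that extra structure, so your shorter argument loses nothing.
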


\proof Suppose to the contrary that $|Y| \le 2k+1$.
As $G$ has a $Y$-cutset, there exist distinct $y_1, y_2\in Y$ such that
there is no $x\in X$ with $N_G(x)=\{y_1,y_2\}$.  Let $S=Y\setminus\{y_1,y_2\}$.
Then $G\ominus S$ has exactly  two trivial components: one consists of $y_1$
and the other consists of $y_2$. However, $|S|/c(G\ominus S) \le (2k-1)/2 <k$,
contradicting $\tau_Y(G) \ge k$.
\qed

Let $\mathcal{C}$ be the set of all $f$-odd components of $G-(A\cup B)$, and
 $\mathcal{C}_1$ be the set of all $f$-odd components $D$ of $G-(A\cup B)$ such that $|V(D)|=1$ and $V(D)\subseteq X$.
(As a component $D\in \mathcal{C}_1$ does not correspond to a component of the hypergraph associated
 with $G$, we will ``exclude'' those in our analysis.)
 By Theorem~\ref{lem2.2}(ii) and Equation~\eqref{eq2}, $e_{G}(D,B)=1$
 for any $D\in \mathcal{C}_1$. Let $$U=V(G)\setminus (A\cup B) \quad \text{and} \quad U_0=\bigcup_{D\in \mathcal{C}_1} V(D).$$

Let $\mathcal{C}\setminus\mathcal{C}_1=\{D_1,D_2,\ldots,D_{\ell}\}$.
Then we have $h_W(A,B)=\ell+|\mathcal{C}_1|$ as $h_W(A,B)=|\mathcal{C}|$.
For those components $D_i$, we have the claim below.

\begin{claim}\label{claim0} For each $D_i$ with $i\in [1,\ell]$, the following statements hold.
	\begin{enumerate}[{\rm(i)}]
		\item If $|V(D_i)|=1$,  then  $k\equiv 1\pmod 2$.
		\item $|V(D_i)| \ge e_G(B,D_i)$.
		\item Let
		$x\in V(D_i)\cap N_G(B)$.  Then $x$ has a neighbor in $V(D_i)\cap Y$.
	\end{enumerate}
\end{claim}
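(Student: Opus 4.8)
The plan is to prove each of the three parts by exploiting the definitions of odd/even components together with the structural facts already established in Theorem~\ref{lem2.2}, especially parts (ii) and (iii). Throughout, recall that a component $D$ is odd precisely when $k|V(D)\cap Y|+e_G(D,B)$ is odd, and that each $D_i$ (being in $\mathcal{C}\setminus\mathcal{C}_1$) is an odd component that is \emph{not} a single vertex of $X$.

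For part (i), I would suppose $V(D_i)=\{y\}$ with $y\in Y$. Since $G$ is bipartite and $D_i$ is a single $Y$-vertex with no edges inside $D_i$, every edge leaving $y$ goes to $A$ or $B$. But $y$ has no neighbors in $U=V(G)\setminus(A\cup B)$ other than itself, and as a component of $G-(A\cup B)$ it can have no edges to $A$; hence all incidences of $y$ to $B$ are counted by $e_G(D_i,B)=e_G(y,B)$. The key observation is that in the incidence bipartite graph (or any $G$ with no isolated $X$-vertices), a single $Y$-vertex forming its own component must in fact be isolated in $G-(A\cup B)$, forcing $e_G(D_i,B)=0$; then the oddness condition $k\cdot 1+0\equiv 1\pmod 2$ immediately yields $k\equiv 1\pmod 2$. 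I would double-check the claim $e_G(y,B)=0$ by noting that any $X$-neighbor of $y$ lying in $U$ would be part of the same component $D_i$, contradicting $V(D_i)=\{y\}$, while any $X$-neighbor in $B$ is excluded by part~(i) of Theorem~\ref{lem2.2} which gives $B\subseteq Y$ (so $B$ contains no $X$-vertices, and a $Y$-vertex $y$ cannot be adjacent to another $Y$-vertex in the bipartite $G$).

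For part (ii), the inequality $|V(D_i)|\ge e_G(B,D_i)$ should follow by bounding the number of edges from $B$ into $D_i$. By Theorem~\ref{lem2.2}(ii), each vertex $v\in V(D_i)$ satisfies $e_G(v,B)\le 1$, so summing over $v\in V(D_i)$ gives $e_G(B,D_i)=\sum_{v\in V(D_i)}e_G(v,B)\le |V(D_i)|$, which is exactly the desired bound. For part (iii), I would take $x\in V(D_i)\cap N_G(B)$, so $x\in X$ (since $B\subseteq Y$ and $G$ is bipartite, only $X$-vertices have $Y$-neighbors in $B$). The goal is to show $x$ has a neighbor inside $D_i$ lying in $Y$. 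Since $D_i\notin\mathcal{C}_1$, $D_i$ is not a single $X$-vertex, so $D_i$ contains a vertex of $Y$; I would argue that $x$, being an $X$-vertex in the connected subhypergraph-component $D_i$, must attach to the rest of $D_i$ through a $Y$-vertex — because in the bipartite $G$ all of $x$'s edges within $D_i$ go to $Y$, and $x$ cannot be isolated in $D_i$ (it has no edges to $A$, and its only external edge goes to $B$). The hard part here is handling the degenerate case: I must rule out that $x$'s sole incidence within $V(D_i)$ is to $B$ with no internal $Y$-neighbor, which would make $\{x\}$ its own component — but that is exactly the $\mathcal{C}_1$ case we have excluded, so the connectivity of $D_i$ forces the required internal $Y$-neighbor.

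The main obstacle I anticipate is being careful about what ``$e_G(D_i,B)$'' and ``$e_G(B,D_i)$'' count and ensuring the bipartite structure is used correctly at each step, particularly in part~(iii) where I must convert ``$x$ lies in a component of size $>1$'' into ``$x$ has a $Y$-neighbor inside that component.'' The cleanest route is to lean heavily on $B\subseteq Y$ from Theorem~\ref{lem2.2}(i), which strips away all $X$-vertices from $B$ and makes the bipartite adjacency arguments transparent; combined with Theorem~\ref{lem2.2}(ii) for the degree bound and the explicit exclusion of $\mathcal{C}_1$, each part should reduce to a short parity or counting argument rather than any intricate computation.
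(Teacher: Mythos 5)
Your proposal is correct and follows essentially the same route as the paper's proof: part (i) from $B\subseteq Y$ (Theorem~\ref{lem2.2}(i)) plus bipartiteness and the parity condition~\eqref{eq2}, part (ii) by summing the bound $e_G(v,B)\le 1$ from Theorem~\ref{lem2.2}(ii), and part (iii) from the exclusion of $\mathcal{C}_1$ together with the connectivity of $D_i$ and bipartiteness. No gaps to report.
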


\begin{proof}
	Since $D_i\not\in \mathcal{C}_1$, we know that  $V(D_i)=\{y\}$ for some $y \in  Y$. Then we have $e_{G}(D_i,B)=0$ as $B\subseteq Y$ by Theorem~\ref{lem2.2}(i).
	From  Equation~\eqref{eq2} with $W=X$, we have $f(y)+e_{G}(D_i,B)\equiv 1\pmod 2$.  As $f(y)=k$, we get $k\equiv 1\pmod 2$.
	This proves (i).
	For (ii),  as $B\subseteq Y$ by Theorem~\ref{lem2.2}(i), we know that $V(D_i)\cap N_G(B) \subseteq V(D_i)\cap X$.  Then every $x\in V(D_i)\cap N_G(B)$ satisfies  $e_G(x, B) =1$ by Theorem~\ref{lem2.2}(ii). Hence $|V(D_i)\cap N_G(B)|= e_G(B,D_i)$ and so $|V(D_i)| \ge e_G(B,D_i)$.
	For (iii), we have $x\in V(D_i)\cap X$ as  $B\subseteq Y$.
	Since $D_i\not\in \mathcal{C}_1$, we get $V(D_i) \ne \{x\}$.
	As $D_i$ is a component of $G-(A\cup B)$ and so is connected,
	we know that $x$ has a neighbor in  $V(D_i)\cap Y$.
\end{proof}

Since $f$ and $g$ are defined differently for vertices of $X$
and vertices of $Y$,  we partition $A$ into two subsets: $A_1=A\cap X$ and $A_2=A\cap Y$.
Denote by
$$  a_1=|A_1|, \quad  a_2=|A_2|, \quad \text{and} \quad b=|B|.$$ Then
\begin{eqnarray}0&> &\delta_{G}(A,B)=\sum\limits_{x\in A} f(x)-\sum\limits_{y\in B}g(y)+\sum\limits_{y\in B}d_{G-A}(y)-h_{W}(A,B) \nonumber \\
&=&2|A\cap X|+k|A\cap Y|-k|B|+e_{G}(B,U)-h_{W}(A,B)\nonumber \\
&=&2a_1+ka_2-kb+e_{G}(B,U)-\ell-|\mathcal{C}_1|\nonumber \\
&=&2a_1+ka_2-kb+e_{G}(B,U\setminus U_0)-\ell. \quad  (\text{By $e_{G}(D,B)=1$
for any $D\in \mathcal{C}_1$}.) \label{eq6}
\end{eqnarray}

For a subset $X^*$ of $X$, if we need to ``cut off" edges leaving vertices of $X^*$, we need to strong-delete some vertices in $N_G(X^*)$. By taking a vertex in $N_G(x)$ for each vertex $x$ of $X^*$,  we can find a subset $Y^*$ of $N_G(X^*)$
with $|Y^*| \le |X^*|$ such that $X^*\subseteq N_G(Y^*)$. We call $Y^*$ a \emph{$Y$-dominating set} of $X^*$.
Note that a minimal $Y$-dominating set of $X^*$ has size at most $|X^*|$.  In our proof, we will strong-delete $Y$-dominating sets of
subsets of $A$  in order to remove the connections of two vertices of $B$ through vertices of $A$.
 We consider two cases regarding the value of $k$.

 \smallskip
 {\bf \noindent Case 1}: $k=1$.
 \smallskip

Then  from~\eqref{eq6},  we have
\begin{equation}
b-a_1>a_1+a_2+e_G(B,U\setminus U_0)-\ell. \label{eqn:b-a1>a1+a2-}
\end{equation}
Our goal is to find a strong-deletion with at most  $a_1+a_2+e_G(B,U\setminus U_0)-\ell$
vertices of $Y$ which creates at least $b-a_1$ components to achieve a contradiction to
$\tau_Y(G) \ge 1$.

For each vertex $x\in A_1$  such that $x$ is adjacent in $G$ to a vertex of $B$, we let
$v_x\in B$ be an arbitrary vertex in $N_G(x)\cap B$.  For each $x\in A_1$  such that $x$ is adjacent in $G$ only  to a vertex in $U$,
we let
$v_x\in U$ be an arbitrary vertex in $N_G(x)\cap U$.
Let $Z_1=\{v_x:  \text{$x\in A_1$}\}$. Then $|Z_1|\le |A_1|$ and
 $N_G(Z_1)\cap A_1=N_G(B \cup U)\cap A_1$.
 For each  component $D_i\in \mathcal{C}\setminus \mathcal{C}_1$,  we will strong-delete some $Y$-vertices
 of $D_i$ to cut off its connection to all but at most one vertex of $B$.
If  $e_G(B,D_i) \ge 2$, then by Theorem~\ref{lem2.2}(ii), we can take a subset $W^*_i\subseteq N_G(B)\cap V(D_i)$ of $e_G(B,D_i)-1$  vertices. Let $W_i\subseteq N_{D_i}(W^*_i)$ be minimal  $Y$-dominating set of $W_i^*$.  Then  we have    $W_i\subseteq Y$ and $|W_i| \le |W^*_i| =e_G(B,D_i)-1$.
 If $e_{G}(B,D_i)\leq 1$, we simply let
 $W_i=\emptyset$.

 It is possible that some component $D_i$ for $i\in [1,\ell]$ satisfies $e_G(B, D_i)=0$.
 Thus,  in order to compare $e_G(B,U\setminus U_0)$ and $\ell$ later on, we need to distinguish
 such components.
 Let $\ell^*$ be the number of components in $\mathcal{C}\setminus \mathcal{C}_1$ whose vertices are  not adjacent in $G$ to any vertex of $B$, and let $U^*$ be the set of vertices of those components.  Let  $Z_1^*=Z_1\cap U^*$. Then  $|Z_1\cap B| \le a_1-|Z_1^*|$.
 Let
$$
G_1=G\ominus\left(A_2\cup Z_1\cup\left(\bigcup\limits_{i=1}^{\ell}W_i\right)\right).
$$
All the $Y$-vertices not contained in $A_2\cup Z_1\cup\left(\bigcup\limits_{i=1}^{\ell}W_i\right)$ remain
in $G_1$. In particular, all vertices of $B\setminus Z_1$ remain in $G_1$.
All the $X$-vertices which are not neighbors of vertices in $A_2\cup Z_1\cup\left(\bigcup\limits_{i=1}^{\ell}W_i\right)$
in $G$ are also remained in $G_1$.
Since $e_{G\ominus W_i}(B, D_i\ominus W_i) \le 1$, it follows that
$$c(G_1) \ge b-|Z_1\cap B|+(\ell^*-|Z_1^*|) \ge b-(a_1-|Z_1^*|)+\ell^*-|Z_1^*|
=b-a_1+\ell^*.$$
On the other hand, we have
\begin{eqnarray*}
&& \left| A_2\cup Z_1\cup\left(\bigcup\limits_{i=1}^{\ell}W_i\right)\right|\\
& \le& a_2+|Z_1|+e_G(B,U\setminus U_0)-\ell+\ell^*\\
& <&-2a_1+b+|Z_1|+\ell^* \quad \text{(by~\eqref{eqn:b-a1>a1+a2-})} \\
&\le& b-a_1+\ell^*.
\end{eqnarray*}

If $b-a_1+\ell^*\ge 2$, then we get a contradiction to $\tau_Y(G)\ge 1$.
 Thus we assume that
$b-a_1 +\ell^* \le 1$.   Together with~\eqref{eqn:b-a1>a1+a2-}, we have $1\geq b-a_1+\ell^* >a_1+a_2+e_G(B,U\setminus U_0)-\ell +\ell^* \ge 0$.  This gives   $a_1=a_2=0$, $e_G(B,U\setminus U_0)+\ell^*=\ell$ and $b+\ell^*=1$.
If $b=0$,  then we get $(A,B)=(\emptyset,\emptyset)$. Thus $G$ is an $f$-odd component as $\delta_{G}(A,B)=-h_{W}(\emptyset,\emptyset)<0$. This is a contradiction to Equation~\eqref{eq2} and the assumption that $k|Y|$ is even.
 Thus $|B|=1$ and $\ell^*=0$.
 As $A=A_1\cup A_2=\emptyset$, we have $e_G(D,B) \ge 1$
 for any component $D$ of $G-(A\cup B)$. Thus by Theorem~\ref{lem2.2}(ii),
 we have $c(G-(A\cup B)) = \ell +|\mathcal{C}_1|$.
 If $\ell\ge 2$,
 then $G\ominus B$ has at least two components
 and so $\tau_Y(G) \le \frac{1}{2}$, a contradiction.  Thus $\ell=1$
 as   $|Y| \ge 4$ by Claim~\ref{claim:Y-size} and the fact that $|B\cup A_2|=1$.
 Since $e_G(B,U\setminus U_0)-\ell +\ell^*=0$,   $\ell^*=0$,  and $\ell = 1$,
 it follows that $e_G(B,U\setminus U_0)=1$, and $e_G(B,U\setminus U_0)=e_G(B,V(D_1))$.
Let $y\in V(D_1)\cap Y$ be a vertex such that the vertex in $N_G(B)\cap V(D_1)$
is adjacent in $D_1$ to $y$.  Then $G\ominus\{y\}$ has at least two components: one is the vertex in $B$,
and the rest contains vertices in $D\ominus\{y\}$.
This again gives  a contradiction
to $\tau_Y(G) \ge 1$. Therefore, when $k=1$, $G$ has a $(\{0,2\},1)$-factor.

\smallskip

{\bf \noindent Case 2}: $k\ge 2$.
\smallskip


The  proof of this case basically  follows the framework used in the proof of the main theorem in~\cite{EJKS1985} by   Enomoto,  Jackson, Katerinis, and Saito. However,  the proof is more involved as
vertices of $X$ can have various degrees in $G$, which in contrast, can only have degree 2 under the setting in~\cite{EJKS1985}.   We will start by strong-deleting vertices of $B$ which have at least $k$
neighbors in $A_1$ iteratively. After this process, the remaining vertices of $B$ will have at most $k-1$
neighbors in the reduced set of $A_1$. Then we will take maximal subsets of $B$, where no two vertices
of the set are adjacent to a common vertex of $A_1$, iteratively. Vertices of each of these sets will play the role of components after we appropriately strong-delete some other $Y$-vertices.
Each time we use  the toughness condition to  provide
an upper bound on the size of the  set.  At the end, a contradiction is
 achieved by comparing an inequality obtained  by combining all the upper bounds
and Inequality~\eqref{eq6}.

 In the first step, we define $B_1 \subseteq B$ iteratively as follows:
For a vertex $y\in B$, if $|N_G(y)\cap A_1| \ge k$,
we place $y$ in $B_1$. Then we continue this process by updating $G$ as $G\ominus B_1$, $B$ as $B\setminus B_1$, and $A_1$ as $A_1\setminus N_G(B_1)$. This process will stop as $G$ is a finite graph. When it stops,
we let
\begin{eqnarray*}
	G_1=G\ominus B_1, \quad
	B_2=B\setminus B_1,      \quad A_{11}=A_{1}\cap N_G(B_1), \quad A_{12}=A_{1}\setminus A_{11}.
\end{eqnarray*}
By the definition of $B_1$ and $G_1$, we know that
\begin{eqnarray}
	|N_{G_1}(y)\cap A_{12}| &\le &k-1 \quad  \text{for every $y\in B_2$}, \label{eqn-R12} \\
	|A_{11}| &\ge & k|B_1|. \label{eqn-R1b2}
\end{eqnarray}

Let $A_{12}^{1}=\{x\in A_{12}: |N_G(x)\cap B_2|\geq 2\}$. Then define $S_1$ to be a maximal subset of $B_2$ such that $N_G(x)\cap B_2 \not\subseteq S_1$ for every $x \in A_{12}^{1}$. Then for every $y'\in B_2\setminus S_1$, there exists $x'\in A^1_{12}$ such that $y'\in N_G(x')$ and $N_G(x')\subseteq S_1\cup \{y'\}$. Let $T_1 = B_2 \setminus S_1$ and
\[A_{12}^{2}= \{x \in A_{12}^{1}: N_G(x)\cap S_1 = \emptyset\}= A^1_{12}\setminus N_G(S_1).\]
Then define $S_2$ to be a maximal subset of $T_1$ such that $N_G(x)\cap T_1 \not\subseteq S_2$ for every $x\in A_{12}^{2}$. Inductively, for an integer $i \geq 2$, let
\begin{eqnarray*}
	T_i &= & T_{i-1}\setminus S_i=T_1\setminus (S_1\cup \ldots \cup S_i), \\
	A_{12}^{i+1}&=& \{x \in A_{12}^{i}: N_G(x)\cap (S_1\cup\cdots\cup S_i)=\emptyset\} =A^1_{12}\setminus N_G(S_1\cup \ldots \cup S_i),
\end{eqnarray*}
 and define $S_{i+1}$ to be a maximal subset of $T_i$ such that $N_G(x)\cap T_i \not\subseteq S_{i+1}$ for every $x\in A_{12}^{i+1}$. Finally, let  $S_k=T_{k-1}$ and let $T_k=\emptyset$.

By the definition of $S_i$ and $T_i$, we see that $\{S_1, S_2, \ldots, S_{k-1}, S_{k}\}$
is a partition of $B_2$.    Furthermore,  for each $i\in [1,k-1]$, by the maximality of $S_i$,   for any $v\in T_i$, there  exists $x_i\in A^{i+1}_{12}$
such that $v\in N_G(x_i)$ and $N_G(x_i)\cap B_2\subseteq S_i\cup \{v\}$.
For each $i\in [1,k]$,  $S_i$ may contain vertices $y$ such that there exists $x\in A_{12}\setminus A_{12}^1$
with  $N_G(x)\cap B_2=\{y\}$. To distinguish such vertices $y$, we  create a finer partition of $S_i$.
Let
$$
S_i^1=\{y\in S_i:  \text{there exists $x\in N_G(y)\cap (A_{12}\setminus A_{12}^1)$ with $N_G(x)\cap B_2=\{y\}$}\}, \quad S_i^2 =S_i\setminus S_i^1.
$$
Corresponding to $S_i^1$, we
let  $C_i=\{x\in A_{12}\setminus A^1_{12}: |N_G(x)\cap B_2|=|N_G(x)\cap S_i^1| = 1\}$.
By the definition, it is clear that $|C_i| \ge |S_i^1|$.
Finally, let
$$
Z=\{x\in A_{12}:  N_G(x)\cap B_2=\emptyset\}, \quad \text{and} \quad A_1^*=A_{12}\setminus (Z\cup \bigcup_{i=1}^k C_i).
$$
Note that $A_1^*=A^1_{12}$.
See Figure~\ref{fig2} for a depiction of these sets defined above.
We first show that $S_k$ has the same property as other $S_i$'s.
\begin{claim}\label{claim:Sk-ind}
	It holds that $N_G(S_k)\cap A^k_{12}=\emptyset$, namely, $N_G(A^k_{12})\cap S_k=\emptyset$.
\end{claim}

\proof  By the maximality of $S_i$, $i\in [1,k-1]$, for each $v\in S_k$,
we know that  there exists $x_i\in A^i_{12}$  such that $N_{G}(x_i)\cap B_2 \subseteq S_i \cup \{v\} $.  As $|N_{G}(x_i)\cap B_2| \ge 2$ and $S_i\cap S_j=\emptyset$ for distinct $i, j\in[1,k-1]$,  it follows
that $x_1, \ldots, x_{k-1}$ are all distinct. By~\eqref{eqn-R12} that 	$|N_{G_1}(v)\cap A_{12}| \le k-1$,
we know that  $N_{G_1}(v)=\{x_1, \ldots, x_{k-1}\}$. Thus the claim holds.
\qed

	\begin{figure}[!htb]
	\begin{center}
			\begin{tikzpicture}
				\begin{scope}[shift={(0,0)}]
			\tikzstyle{every node}=[font=\LARGE]
			\draw  (3.75,11.75) rectangle (15.25,10.5);
			\draw (5,11.75) to[short] (5,10.5);
			\draw [](6.25,11.75) to[short] (6.25,10.5);
			\draw [](7.5,11.75) to[short] (7.5,10.5);
			\draw [](10,11.75) to[short] (10,10.5);
			\draw [](11.25,11.75) to[short] (11.25,10.5);
			\node [font=\large] at (4.5,11) {$A_{11}$};
			\node [font=\large] at (5.5,11) {$C_1$};
			\node [font=\large] at (7,11) {$C_2$};
			\node [font=\large] at (4.75,8.5) {$B_1$};
			\draw [short] (5,7.75) -- (5,5.75);
			\draw [short] (12.5,6.75) -- (12.5,5.75);
			\draw [short] (5,13) -- (5,12);
			\draw [short] (15.25,13) -- (15.25,12);
			\draw [->, >=stealth] (5,12.5) -- (9.5,12.5);
			\draw [->, >=stealth] (15.25,12.5) -- (10.5,12.5);
			\draw [->, >=stealth] (5,6.25) -- (8.5,6.25);
			\draw [->, >=stealth] (12.5,6.25) -- (9.25,6.25);
			\node [font=\Large] at (8.8,6.25) {$B_2$};
			\draw [short] (5,8.5) -- (5,8.5);
			\draw  (4.25,9.25) rectangle (12.5,8);
			\draw [short] (5,9.25) -- (5,8);
			\draw [short] (6.25,9.25) -- (6.25,8);
			\draw [short] (7.5,9.25) -- (7.5,8);
			\draw [short] (10,9.25) -- (10,8);
			\node [font=\large] at (10.5,11) {$C_k$};
			\draw [short] (12.25,11.75) -- (12.25,10.5);
			\node [font=\large] at (11.75,11) {$Z$};
			\node [font=\large] at (13.75,11) {$A_1^*=A^1_{12}$};
				\node [font=\Large] at (15.75,11) {$A_1$};
			\node [font=\normalsize] at (5.25,8.75) {$S_1^1$};
			\node [font=\normalsize] at (5.75,8.25) {$S_1^2$};
			\draw [short] (6.25,9.25) -- (5,8);
			\draw [short] (7.5,9.25) -- (6.25,8);
			\draw [short] (10,8) -- (12.5,9.25);
			\node [font=\normalsize] at (6.5,8.75) {$S_2^1$};
			\node [font=\normalsize] at (7,8.25) {$S_2^2$};
			\node [font=\normalsize] at (10.5,8.75) {$S_k^1$};
			\node [font=\normalsize] at (12,8.5) {$S_k^2$};
				\node [font=\Large] at (13,8.5) {$B$};
			\node [font=\Large] at (10,12.5) {$A_{12}$};
			\draw [short] (6.25,7.7) -- (6.25,6.25);
			\draw [short] (12.5,7.25) -- (12.5,6.75);
			\draw [->, >=stealth] (6.25,7) -- (8.75,7);
			\draw [->, >=stealth] (12.5,7) -- (9.75,7);
			\node [font=\Large] at (9.25,7) {$T_1$};
			\draw [short] (7.5,7.75) -- (7.5,7);
			\draw [short] (12.5,7.75) -- (12.5,7.25);
			\draw [->, >=stealth] (7.5,7.5) -- (9.75,7.5);
			\draw [->, >=stealth] (12.5,7.5) -- (10.75,7.5);
			\node [font=\Large] at (10.25,7.5) {$T_2$};
			\draw [short] (4.5,10.5) -- (4.5,9.25);
			\draw [short] (4.75,10.5) -- (4.75,9.25);
			\draw [short] (5.5,10.5) -- (5.25,9.25);
			\draw [short] (5.75,10.5) -- (5.5,9.25);
			\draw [short] (7,10.5) -- (6.5,9.25);
			\draw [short] (7.25,10.5) -- (6.75,9.25);
			\draw [short] (10.5,10.5) -- (10.25,9.25);
			\draw [short] (10.75,10.5) -- (10.5,9.25);
			\draw [line width=1.3pt, dashed] (8.5,11) -- (9,11);
			\draw [line width=1.3pt, dashed] (8.5,8.5) -- (9,8.5);
		\end{scope}
\end{tikzpicture}
\end{center}
\caption{Subsets of $A_1$ and $B$ created in Case 2}
\label{fig2}
\end{figure}

In the following two claims, we establish an upper bound on the size of  $S_i$ for each $i\in[1,k]$.

\begin{claim}\label{claim1} $k|S_1|\leq  |B_1|+a_2+ |T_1|+2|C_1|+2|Z|+e_{G}(U\setminus U_0,S_1)-\ell.$
\end{claim}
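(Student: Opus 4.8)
The plan is to extract an upper bound on $k|S_1|$ directly from the $Y$-toughness hypothesis $\tau_Y(G)\ge k$, by constructing a $Y$-cutset whose strong-deletion turns each vertex of $S_1$ into its own component. The guiding observation is that $S_1\subseteq B_2\subseteq Y$ (Theorem~\ref{lem2.2}(i)), and that the maximality defining $S_1$ forces every $x\in A_{12}$ with $|N_G(x)\cap B_2|\ge 2$ to have a neighbour in $T_1=B_2\setminus S_1$. Consequently such an $x$ is destroyed the moment $T_1$ is strong-deleted, so after strong-deleting $T_1$ the only $A_{12}$-vertices still touching $S_1$ are those whose \emph{unique} $B_2$-neighbour lies in $S_1$; for vertices of $S_1^2$ even these vanish, while for vertices of $S_1^1$ they are exactly the private neighbours recorded by $C_1$. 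Thus, after the right deletions, the residual attachments of $S_1$ to the rest of $G$ are limited and explicitly indexed by $C_1$, $Z$, and the edges from $S_1$ into $U\setminus U_0$.

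Concretely, I would take the candidate cutset
\[
S^{*}=B_1\cup A_2\cup T_1\cup W,
\]
all of whose vertices lie in $Y$, where $W\subseteq Y$ is an auxiliary set of controlled size severing the last links of $S_1$. Strong-deleting $B_1$ erases $A_{11}=A_1\cap N_G(B_1)$, and strong-deleting $A_2$ erases the $A_2$-vertices together with their incident $X$-vertices, clearing every route through $A$ and through the high-degree part $B_1$ of $B$. The remaining links from $S_1$ run (a) to the components $D_i$, along the $e_G(U\setminus U_0,S_1)$ edges joining $S_1$ to $X$-vertices inside $U\setminus U_0$, and (b) through the private neighbours counted by $C_1$ and the $B_2$-avoiding vertices of $Z$. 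For (a) I would use Claim~\ref{claim0}(iii): each offending $X$-vertex of $D_i$ has a $Y$-neighbour inside $D_i$, so placing one such neighbour into $W$ per offending vertex costs at most $e_G(U\setminus U_0,S_1)$ vertices; for (b) the vertices associated with $C_1$ and $Z$ are neutralised by a bounded number of further additions to $W$. With this done, each $y\in S_1$, together with whatever $X$-vertices remain private to it, is a component of $G\ominus S^{*}$ on its own, while the $\ell$ components $D_1,\dots,D_\ell$ persist (in reduced form) as components as well; hence $c(G\ominus S^{*})\ge |S_1|+\ell$, and $\tau_Y(G)\ge k$ yields $k(|S_1|+\ell)\le |S^{*}|$.

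The crux — and the point where the non-uniformity of $H$ makes this heavier than in~\cite{EJKS1985} — is the charging that converts the toughness inequality into exactly $k|S_1|\le |B_1|+a_2+|T_1|+2|C_1|+2|Z|+e_G(U\setminus U_0,S_1)-\ell$. Because an $X$-vertex may have large degree, a single such vertex can tie $S_1$ simultaneously to $A_2$, to $Z$, to $C_1$, and to several $D_i$, so I must verify that no attachment is overlooked while each deleted $Y$-vertex is charged only once; this is what I expect to produce the coefficient $2$ on $|C_1|$ and $|Z|$ (each such $A_1$-vertex being matched against the $2a_1$ term that will be needed when this bound is later combined with~\eqref{eq6}). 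The saving of $-\ell$ should then come from $k(|S_1|+\ell)\le |S^{*}|$ together with $k\ge 2$, since counting the $\ell$ persistent components $D_i$ alongside the $|S_1|$ singletons trades a factor-$k$ contribution for a unit one. I therefore expect the main obstacle to be not any single inequality but the simultaneous verification that $W$ truly severs every $S_1$–$U$ edge, that each $y\in S_1$ is genuinely isolated, and that the $D_i$ survive as distinct components so that $c(G\ominus S^{*})\ge |S_1|+\ell$ and in particular $c\ge 2$, allowing toughness to be invoked; the degenerate cases $|S_1|\le 1$ and $D_i$ a single vertex would have to be checked separately.
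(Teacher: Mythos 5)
Your overall strategy is the paper's (strong-delete $B_1\cup A_2\cup T_1$ plus a severing set so that the vertices of $S_1$ end up in distinct components, then invoke $\tau_Y(G)\ge k$), but the pivotal count $c(G\ominus S^{*})\ge |S_1|+\ell$ is false, and this is not a fixable technicality within your accounting. The severing vertices you put into $W$ lie \emph{inside} the components $D_i$ (as do the vertices neutralizing $C_1\cup Z$), and strong-deleting them can annihilate a $D_i$ entirely. Concretely, by Theorem~\ref{lem2.2}(ii) each $X$-vertex of an odd component sends at most one edge to $B$, so consider $D_i$ with $V(D_i)=\{x_1,x_2,y\}$, where $y\in Y$, $x_1,x_2\in X$ are adjacent to $y$, and $x_j$ is adjacent to $s_j\in S_1$ with $s_1\ne s_2$: the only $Y$-vertex of $D_i$ available to sever these two $S_1$--$D_i$ edges is $y$, and $D_i\ominus\{y\}=\emptyset$, so $D_i$ does not persist. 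Hence the only components you may count beyond the $|S_1|$ singletons are those $D_i$ with no edge to $S_1$ that also avoid the neutralizing set; writing $\ell_1$ for the number of $D_i$ with $e_G(S_1,D_i)\ge 1$ and $U_1^{*}$ for the neutralizing set, the honest count is $c(G\ominus S^{*})\ge |S_1|+\max\{\ell-\ell_1-|U_1^{*}|,0\}$. Feeding this, together with your severing cost of up to $e_G(U\setminus U_0,S_1)$ (one deleted $Y$-vertex per offending $X$-vertex, hence per edge), into the toughness hypothesis gives at best
\[
k|S_1|\le |B_1|+a_2+|T_1|+2|C_1|+2|Z|+e_{G}(U\setminus U_0,S_1)-\ell+\ell_1,
\]
which is weaker than the claim by $\ell_1$.

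The paper recovers exactly this deficit by severing all \emph{but one} edge from $S_1$ to each $D_i$: by Claim~\ref{claim0}(iii) one can choose $W_i\subseteq V(D_i)\cap Y$ with $|W_i|\le e_G(S_1,D_i)-1$ so that $D_i\ominus W_i$ retains at most one edge to $S_1$. Each remnant then hangs off at most one vertex of $S_1$, so distinct vertices of $S_1$ still lie in distinct components (the $D_i$ adjacent to $S_1$ are never counted as separate components at all), while the severing cost drops to $e_G(U\setminus U_0,S_1)-\ell_1$; that saving of $\ell_1$ is precisely what produces the $-\ell$ in the statement. Separately, your closing remark that the case $|S_1|\le 1$ ``would have to be checked separately'' conceals a second substantive piece of the proof: when $c(G_2)\le 1$ the toughness condition cannot be invoked at all, and the paper must show $S_1\ne\emptyset$ (using $B_2\ne\emptyset$, via \eqref{eq6} and \eqref{eqn-R1b2}), apply Theorem~\ref{lem2.2}(iv) to get $\ell\ge 2|Z|$, use Claim~\ref{claim:Y-size} ($|Y|\ge 2k+2$), and build a second, different cutset of size at most $k+1$ producing two components, before reaching a contradiction with $\tau_Y(G)\ge k$. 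Without both repairs the claim remains unproved.
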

\begin{proof}
By Theorem~\ref{lem2.2}(iii), in $G$, all the edges between $S_1$ and $U\setminus U_0$ are exactly the
edges between $S_1$ and vertices of components in $\mathcal{C} \setminus \mathcal{C}_1$. Without loss of generality,  we let $D_1, \ldots, D_{\ell_1}$ be all the  components in $\mathcal{C} \setminus \mathcal{C}_1$
such that $e_{G}(S_1, D_i)\ge 1$ for each $i\in [1,\ell_1]$, where $0 \le \ell_1 \le \ell$.
Assume further that $D_1, \ldots, D_{\ell_2}$ are  all the components among $D_1, \ldots, D_{\ell_1}$
such that $e_{G}(S_1, D_i)\ge 2$ for each $i\in [1,\ell_2]$, where $0\le \ell_2\le \ell_1$.
For each component $D_i$ with $i\in [1,\ell_2]$,  by Theorem~\ref{lem2.2}(ii), we can take a subset $W_i^* \subseteq N_{G}(S_1)\cap V(D_i)$ of $e_{G}(S_1,D_i)-1$ vertices.
Let  $W_i\subseteq N_{D_i}(W_i^*)$ be a minimal $Y$-dominating set of $W_i^*$ (this is possible by Claim~\ref{claim0}). Then $|W_i| \le |W_i^*| =e_{G}(S_1,D_i)-1$.
Let  $U_1^*\subseteq U\cap Y$  be a minimal $Y$-dominating set of $C_1\cup Z$. Then
$|U_1^*| \le |C_1|+|Z|$.
Now we let
$$
G_2={G}\ominus\left(B_1\cup A_2\cup  T_1 \cup U^*_1  \cup \left(\bigcup\limits_{i=1}^{\ell_2}W_i\right)\right).
$$
Then,
for each component $D_i$ with $i\in [1,\ell_1]$,  we have $e_{G_2}(D_i\ominus W_i,V(G_2)\setminus V(D_i))=e_{G_2}(D_i\ominus W_i, S_1)\le 1$. Since every vertex of $N_G(S_1)\cap A_1$
has in $G$ a neighbor from $T_1$ by the definition of $S_1$, it follows that
 all vertices of $N_G(S_1)\cap A_1$ are vanished in $G_2$.
Thus  $c(G_2) \ge |S_1| +\max\{\ell-\ell_1-|U_1^*|, 0\}$.

First consider the case $c(G_2)\geq 2$.  By the toughness of $G$, we get
\begin{eqnarray*}
	k(|S_1|+\max\{\ell-\ell_1-|U_1^*|, 0\}) &\le& |B_1|+\left |A_2\cup T_1\cup U^*_1\cup  \left(\bigcup\limits_{i=1}^{\ell_2}W_i\right) \right| \\
	&\le & |B_1|+a_2+|T_1|+|U^*_1|+e_G(U\setminus U_0, S_1)-\ell_1.
\end{eqnarray*}
As $|U_1^*| \le |C_1|+|Z|$, we then get
\begin{eqnarray*}
	k|S_1| &\le&  |B_1|+a_2+|T_1|+|U^*_1|+e_G(U\setminus U_0, S_1)-\ell_1-k(\ell-\ell_1-|U_1^*|)\\
	&\le &  |B_1|+a_2+|T_1|+|U^*_1|+e_G(U\setminus U_0, S_1)-\ell_1-(\ell-\ell_1-|U_1^*|) \\
	&=& |B_1|+a_2+|T_1|+2|U^*_1|+e_G(U\setminus U_0, S_1)-\ell \\
	&\le & |B_1|+a_2+ |T_1|+2|C_1|+2|Z|+e_{G}(U\setminus U_0,S_1)-\ell,
\end{eqnarray*}
as desired.

Consider now that    $|S_1|+\max\{\ell-\ell_1-|U_1^*|, 0\} \le c(G_2) \le 1$. This gives $|S_1| \le 1$.
By~\eqref{eq6} and~\eqref{eqn-R1b2}, we have $B_2 \ne \emptyset$.
Now by the maximality of $S_1$, we know that $S_1\ne \emptyset$ and
thus $|S_1|=1$.   Therefore,  from $|S_1|+\max\{\ell-\ell_1-|U_1^*|, 0\}   \le 1$, we deduce that $\ell-\ell_1 \le |U_1^*|$.
We also assume to the contrary that $k|S_1|> |B_1|+a_2+ |T_1|+2|C_1|+2|Z|+e_{G}(U\setminus U_0,S_1)-\ell.$
Since
\begin{eqnarray*}
	&&|B_1|+\left |A_2\cup T_1\cup U^*_1\cup  \left(\bigcup\limits_{i=1}^{\ell_2}W_i\right) \right| \\
	&\le & |B_1|+a_2+|T_1|+|U^*_1|+e_G(U\setminus U_0, S_1)-\ell_1 \\
	&= &  |B_1|+a_2+|T_1|+|U^*_1|+(\ell-\ell_1)+e_G(U\setminus U_0, S_1)-\ell \\
	&\le & |B_1|+a_2+|T_1|+2|U^*_1|+e_G(U\setminus U_0, S_1)-\ell \\
	&\le & |B_1|+a_2+ |T_1|+2|C_1|+2|Z|+e_{G}(U\setminus U_0,S_1)-\ell,
\end{eqnarray*}
it follows that
\begin{eqnarray}\label{eqn-cut-size}
|B_1|+\left |A_2\cup T_1\cup U^*_1\cup  \left(\bigcup\limits_{i=1}^{\ell_2}W_i\right) \right| <k|S_1|=k.
\end{eqnarray}

Let $U_1\subseteq U_1^*$  be a minimal $Y$-dominating set of $Z$. Then
$|U_1| \le |Z|$  and  $|U_1| \le |U_1^*|$.
Let $W=A_2\cup B\cup U_1$.  Then we have $|W| \le k$ from~\eqref{eqn-cut-size}, as $|S_1|=1$ and $S_1\cup T_1=B_2$.
By Theorem~\ref{lem2.2}(iv),
we have $\ell \ge 2|Z|$. If $ \ell-|Z|\ge 2$, then
as $|W|\le k$ and $c(G\ominus W) \ge  \ell-|Z|$,
the $Y$-toughness of $G$ implies
\begin{eqnarray*}
	k\ge |W|\ge kc(G\ominus W) \ge k(\ell-|Z| ).
\end{eqnarray*}
This implies  $ \ell-|Z| \le 1$, a contradiction. Thus  $ \ell-|Z|\le 1$.
Since $\ell-|Z| \ge |Z|$, we then have $|Z|\le 1$ and $\ell\le 2$.

Recall that for each component $D_i$ with $i\in [1,\ell_1]$,  we have $e_{G_2}(D_i\ominus W_i,V(G_2)\setminus V(D_i))=e_{G_2}(D_i\ominus W_i, S_1)\le 1$.
We let $W^*$ be a subset of at most $\ell_1$ vertices of $(\bigcup_{i=1}^{\ell_1} V(D_i\ominus W_i))\cap Y$
 that is a $Y$-dominating set of  $N_{G_2}(S_1)\cap (\bigcup_{i=1}^{\ell_1} V(D_i\ominus W_i))$.
Let $W=A_2\cup  T_1 \cup U^*_1  \cup \left(\bigcup\limits_{i=1}^{\ell_2}W_i\right) \cup W^* $.
As $|W^*| \le \ell_1\le \ell \le 2$, from~\eqref{eqn-cut-size}, we know that $|W| \le k+1$.
Then  as $|Y| \ge 2k+2$ by Claim~\ref{claim:Y-size}, $|S_1|=1$, and the vertex of $S_1$
is a component of $G\ominus W$, it follows that $c(G_2) \ge 2$.
However, $|W|/c(G_2) \le (k+1)/2<k$, a contradiction to $\tau_Y(G)\ge k$.
\end{proof}

For each $i\in [2,k]$, we let  $A^*_{1i}=\{x\in A_1^*: N_{G_1}(x)\cap B_2\subseteq \bigcup_{j=1}^{i}S_j, N_{G_1}(x)\cap( \bigcup_{j=1}^{i-1}S_j) \ne \emptyset, N_{G_1}(x)\cap S_i \ne \emptyset\}$. By the definition,
we have $A^*_{1i} \cap A_{1j}^*=\emptyset $ for distinct $i,j\in [2,k]$. Therefore,
\begin{equation}\label{eqn:sum-of-A_{1i}}
	\sum_{i=2}^k|A_{1i}^*| \le |A_1^*|.
\end{equation}

\begin{claim}\label{claim2} For  each $i\in [2, k]$, we have  $$k|S_{i}|\leq  |B_1|+a_2+|C_i|+|T_i|+|A^*_{1i}|+e_{G}(S_i,U\setminus U_0).$$
\end{claim}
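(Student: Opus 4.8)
The plan is to mimic the strong-deletion argument of Claim~\ref{claim1}, but tailored to $S_i$ with $i\ge 2$: I will construct a single strong-deletion of $G_1$ that turns every vertex of $S_i$ into an (essentially singleton) component, so that $c(G_2)\ge |S_i|$, and then read off the stated inequality from $\tau_Y(G)\ge k$. Throughout I use $B\subseteq Y$ (Theorem~\ref{lem2.2}(i)), that even components send no edge to $B$ (Theorem~\ref{lem2.2}(iii)), and that each singleton in $U_0$ meets $B$ in exactly one vertex, so it can never merge two vertices of $S_i$ (this is why $U_0$ is excluded from the $e_G$-term).

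First I would assemble the deletion set. Besides the already removed $B_1$, I delete $A_2$ and the whole tail $T_i=S_{i+1}\cup\cdots\cup S_k$. The key structural point, proved from the maximality in the definition of $S_i$, is that no $x\in A_{12}$ with $|N_G(x)\cap B_2|\ge 2$ has $N_G(x)\cap B_2\subseteq S_i$; hence every $A_1^*$-vertex adjacent to $S_i$ either has a neighbour in $T_i$, and is therefore destroyed by the strong-deletion of $T_i$, or has all its $B_2$-neighbours in $\bigcup_{j\le i}S_j$ with at least one in $\bigcup_{j<i}S_j$ and one in $S_i$, i.e. lies in $A_{1i}^*$. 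For the latter I delete, for each $x\in A_{1i}^*$, one of its neighbours in $\bigcup_{j<i}S_j$ (a set $W_A$ with $|W_A|\le |A_{1i}^*|$), which strong-deletes all of $A_{1i}^*$.

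Next I sever the remaining attachments of $S_i$. The only $A_{12}$-neighbours of $S_i$ left are in $C_i$, each joined to a single vertex of $S_i^1$; for those $C_i$-vertices that also reach $U$, I include one $U\cap Y$-neighbour apiece in a set $W_C$ with $|W_C|\le |C_i|$ (the others are harmless pendants). Finally, for the edges from $S_i$ into $U\setminus U_0$, Claim~\ref{claim0}(iii) lets me pick, for each $X$-endpoint lying in a component $D_j$, a $Y$-neighbour inside $D_j$; deleting these (a set $W_U$ with $|W_U|\le e_G(S_i,U\setminus U_0)$) strong-removes every $X$-vertex of $U$ adjacent to $S_i$. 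Setting $G_2=G_1\ominus\bigl(A_2\cup T_i\cup W_A\cup W_C\cup W_U\bigr)$, a short check shows that no surviving vertex is adjacent to two vertices of $S_i$ and no surviving path joins two of them, so each vertex of $S_i$ is its own component and $c(G_2)\ge |S_i|$. When $c(G_2)\ge 2$, the $Y$-toughness then gives $k|S_i|\le k\,c(G_2)\le \bigl|B_1\cup A_2\cup T_i\cup W_A\cup W_C\cup W_U\bigr|\le |B_1|+a_2+|T_i|+|A_{1i}^*|+|C_i|+e_G(S_i,U\setminus U_0)$, which is exactly the claim. The inequality is trivial when $|S_i|=0$, and the residual case $|S_i|=1$ (where $c(G_2)$ may equal $1$) I would handle as in the final paragraph of Claim~\ref{claim1}, enlarging the cutset by boundedly many vertices to force a second component and invoking $|Y|\ge 2k+2$ from Claim~\ref{claim:Y-size} against $\tau_Y(G)\ge k$.

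I expect the main obstacle to be the structural bookkeeping of the second and third paragraphs: proving via the maximality of $S_i$ that every $A_1^*$-neighbour of $S_i$ is caught either by $T_i$ or by $A_{1i}^*$, and then verifying that after all four deletions no two vertices of $S_i$ remain connected, where the interactions through $C_i$-to-$U$ paths and through the odd components are the delicate ones. The cutset size count and the toughness step are routine once this separation is established.
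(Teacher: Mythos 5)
Your proposal is correct and takes essentially the same approach as the paper: your $W_A$, $W_C$, $W_U$ are exactly the paper's $W_i^*$, $U_i^*$, and $\bigcup_j W_j$, and strong-deleting $B_1\cup A_2\cup T_i$ together with your dichotomy (each $A_1^*$-neighbour of $S_i$ is caught either by $T_i$ or lies in $A_{1i}^*$) is precisely how the paper isolates the vertices of $S_i$ before applying $\tau_Y(G)\ge k$. The only difference is cosmetic: in the residual case $|S_i|=1$ the paper does not enlarge the cutset but observes directly that $c(G_2)=1$ would force $|Y|-1\le |W|<k|S_i|=k$, contradicting $|Y|\ge 2k+2$, which is the same use of Claim~\ref{claim:Y-size} against the toughness condition that you invoke.
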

\begin{proof} If $S_i=\emptyset$, then the claim holds vacuously. So we assume  $S_i\neq \emptyset$.
Let $W^*_i \subseteq \bigcup_{j=1}^{i-1}S_j$  be a minimal $Y$-dominating set of $A^*_{1i}$.
Then $|W_i^*| \le |A^*_{1i}|$.
Furthermore,  we let   $U_i^*\subseteq U\cap Y$   be a minimal $Y$-dominating set of $C_i$.  Then
 $|U_i^*| \le |C_i|$.
We assume, without loss of generality, that for some $\ell_1\in [0, \ell]$,
$D_1, \ldots, D_{\ell_1}$ are all the components among $D_1, \ldots, D_\ell$ that each is connected to some vertex of $S_i$
by an edge of $G_1$.
For each $j\in [1,\ell_1]$,
 we let $W_j\subseteq V(D_j)\cap Y$ be a minimal $Y$-dominating set of $N_{G_1}(S_i)\cap V(D_j)$.
 Then $|W_j| \le e_{G_1}(S_i, D_j)$ by Theorem~\ref{lem2.2}(ii).
 Let $W=B_1\cup A_2\cup U^*_i\cup T_i\cup W^*_i\cup (\bigcup_{i=1}^{\ell_1}W_i)$
and $G_2=G\ominus W$.
Then we have $c(G_2) \ge |S_i|$.
If $|S_i| \ge 2$, we get the desired inequality by $\tau_Y(G) \ge k$. Thus we have $|S_i| \le 1$.
As $S_i\ne \emptyset$, we have $|S_i| = 1$.
If we assume by contradiction that  $k|S_{i}|>|B_1|+a_2+|C_i|+|T_i|+|A^*_{1i}|+e_{G}(S_i,U\setminus U_0) \ge |W|$,
then we know that  $|V(G_2) \cap Y| >2k+2-k$ as $|Y| \ge 2k+2$ by Claim~\ref{claim:Y-size}.
As the vertex of $S_i$ is contained in a component of $G_2$ with one single $Y$-vertex,
it follows that $c(G_2) \ge 2$.
However, $|W|/c(G_2) < k/2$, a contradiction to $\tau_Y(G)\ge k$.
\end{proof}

By Claims~\ref{claim1} and~\ref{claim2}, we get
\begin{eqnarray*}
&& k(b-|B_1|)=k(|S_1|+\ldots+|S_k|) \\
&\leq&2|C_1|+2|Z|+k|B_1|+ ka_2+\sum_{i=2}^k|C_i|+\sum\limits_{i=1}^{k-1}|T_i|+ \sum\limits_{i=2}^{k}|A_{1i}^*|+\sum\limits_{i=1}^{k}e_{G}(U\setminus U_0,S_i)-\ell \\
& \le &2|C_1|+2|Z|+k|B_1|+ka_2+\sum_{i=2}^k|C_i|+\sum\limits_{i=1}^{k-1}|T_i|+|A_1^*|+e_{G}(U\setminus U_0, B\setminus B_1)-\ell.
\end{eqnarray*}
On the other hand, as $\{A_{11}, Z, C_1, \ldots, C_k, A_1^*\}$ is a partition of $A_1$,
from~\eqref{eq6}, and using  $|A_{11}| \ge k|B_1|$ by~\eqref{eqn-R1b2}, we have
\begin{eqnarray*}
k(b-|B_1|)&>&ka_2+ 2a_1+e_{G}(U\setminus U_0,B)-\ell-k|B_1|\\
 &\ge & ka_2+2|A_{11}|+2|Z|+2\sum_{i=1}^k|C_i|+2|A_1^*|+e_{G}(U\setminus U_0,B\setminus B_1)-\ell-k|B_1| \\
 &\ge & ka_2+k|B_1| +2|Z|+2\sum_{i=1}^k|C_i|+2|A_1^*|
 +e_{G}(U\setminus U_0,B\setminus B_1)-\ell.
\end{eqnarray*}
Combining
the two sets of inequality above gives
\begin{equation}\label{kge3}
|A_1^*|+\sum_{i=2}^k|C_i|<	\sum\limits_{i=1}^{k-1}|T_i|.
\end{equation}

For each $i\in[1,k-1]$, let $|T_i|=t_i$.
By the maximality of $S_i$,
for each $y_j\in T_i$ with $j\in [1,t_i]$, there exists $x^j_i\in A^*_{1}$
such that $y_j\in N_{G}(x^j_i)$ and  $N_{G}(x^j_i)\cap B_2 \subseteq S_i\cup \{y_j\}$.
Thus we find
$t_i$ distinct vertices  $x_i^1, \ldots, x_i^{t_i}$
in $A_{1}^*$. Furthermore, for distinct $i,j\in [1,k-1]$, we
have $x_i^p \ne x_j^q$ by the definitions of $T_i, T_j$, where $p\in[1,t_i]$ and $q\in [1,t_j]$.
Therefore, we have $|A_1^*| \ge \sum_{i=1}^{k-1}|T_i|$.  This shows a contradiction
to~\eqref{kge3}.
 The proof of Theorem~\ref{thm2} is  now complete.

\section{Acknowledgement}

The authors wish to thank the anonymous reviewers for their valuable
suggestions, which greatly improved the presentation of this paper.

\bibliographystyle{abbrv}
\bibliography{arxiv}

\end{document}